\colorlet{ColorPink}{red!30}
\definecolor{Gump}{rgb}{0,0.6,0.4}
\theoremstyle{theorem}
\newtheorem{theorem}{Theorem}[section]
\newtheorem{lemma}[theorem]{Lemma}
\newtheorem{proposition}[theorem]{Proposition}
\newtheorem{corollary}[theorem]{Corollary}
\newtheorem{definition}[theorem]{Definition}
\theoremstyle{remark}
\newtheorem{remark}[theorem]{Remark}
\newtheorem{example}[theorem]{Example}
\newcommand{\R}{\mathbb R}
\newcommand{\N}{\mathbb N}
\newcommand{\dx}{\,\mathrm{d}x}
\newcommand{\dt}{\,\mathrm{d}t}
\newcommand{\dHaus}{\,\mathrm{d}\mathscr{H}}
\newcommand{\dif}{\mathrm{d}}
\newcommand{\curl}{\mathrm{curl}}
\renewcommand{\dif}{\operatorname{d}\!}
\newcommand{\lebe}{\operatorname{L}}
\newcommand{\sobo}{\operatorname{W}}
\newcommand{\hilb}{\operatorname{H}}
\newcommand{\imag}{\operatorname{i}}
\newcommand{\locc}{\operatorname{loc}}
\newcommand{\hold}{\operatorname{C}}
\newcommand{\ball}{\operatorname{B}}
\newcommand{\di}{\operatorname{div}}
\newcommand{\A}{\mathbb{A}}
\newcommand{\B}{\mathbb B}
\newcommand{\id}{\operatorname{Id}}
\newcommand{\T}{\operatorname{T}}
\renewcommand{\phi}{\varphi}
\newcommand{\C}{\mathbb{C}}
\newcommand{\Image}{\mathrm{Im}}
\newcommand{\spano}{\mathrm{span}}
\newcommand{\Lin}{\mathscr{L}}
\renewcommand{\H}{\mathrm{H}}
\newcommand{\lbq}{\operatorname{L}_{\mathbb{B}}^2(Q)}
\newcommand{\rsym}{\mathbb{R}_{\operatorname{sym}}^{n\times n}}
\newcommand{\torus}{\mathbb{T}}
\numberwithin{equation}{section}
\begin{document}
	
	
	\title[Operators with constant rank over $\mathbb{C}$]{Natural annihilators and \\ operators of constant rank over $\mathbb{C}$}

	\author[F.~Gmeineder]{Franz Gmeineder}
	\address[F.~Gmeineder]{Fachbereich Mathematik, Universit\"{a}t Konstanz, Universit\"{a}tsstra\ss e 10, Konstanz, Germany}
	\author[S.~Schiffer]{Stefan Schiffer}
	\address[S.~Schiffer]{Institut f\"{u}r angewandte Mathematik, Universit\"{a}t Bonn, Endenicher Allee 60, 53115 Bonn, Germany}

	\maketitle
	\begin{abstract}
	Even if the Fourier symbols of two constant rank differential operators have the same nullspace for each non-trivial phase space variable, the nullspaces of those differential operators might differ by an infinite dimensional space. Under the natural condition of constant rank over $\mathbb{C}$, we establish that the equality of nullspaces on the Fourier symbol level already implies the equality of the nullspaces of the differential operators in $\mathscr{D}'$ modulo polynomials of a fixed degree. In particular, this condition allows to speak of \emph{natural annihilators} within the framework of complexes of differential operators. As an application, we establish a Poincar\'{e}-type lemma for differential operators of constant complex rank in two dimensions. 
	\end{abstract}
	\section{Introduction} 
	\subsection{Aim and scope}
	Let $V,W,X$ be three real, finite dimensional inner product spaces and let, for $k,\ell\in\mathbb{N}$, 
	\begin{align}\label{eq:diffops}
	\A := \sum_{|\alpha|=k}\A_{\alpha}\partial^{\alpha},\;\;\;\mathbb{B}:=\sum_{|\beta|=\ell}\mathbb{B}_{\beta}\partial^{\beta}
	\end{align}
	be two constant coefficient differential operators on $\R^{n}$ from $V$ to $W$ or from $W$ to $X$, respectively. By this we understand that for each $|\alpha|=k$ and $|\beta|=\ell$, we have $\A_{\alpha}\in\mathscr{L}(V;W)$ or $\mathbb{B}_{\beta}\in\mathscr{L}(W;X)$. 
	
	For instance, this setting comprises the usual gradient $Du$ for maps $u\colon\R^{n}\to\R^{N}$ or the symmetric gradient $\varepsilon(u):=\frac{1}{2}(Du+Du^{\top})$ for maps $u\colon\R^{n}\to\R^{n}$ as frequently employed in nonlinear elasticity; these can be recovered by the particular choices $(V,W)=(\R^{N},\R^{N\times n})$ or $(V,W)=(\R^{n},\rsym)$, respectively. To describe the main question of the present paper, note that 
	\[ \begin{tikzcd}
	\hold^{\infty}(\R^{n};\R^{N}) \arrow{r}{D} & \hold^{\infty}(\R^{n};\R^{N\times n}) \arrow{r}{\mathrm{curl}} & \hold^{\infty}(\R^{n};\R^{N\times n}),
	\end{tikzcd}\] \[
	\begin{tikzcd}
	\hold^{\infty}(\R^{n};\R^{n}) \arrow{r}{\varepsilon} & \hold^{\infty}(\R^{n};\rsym) \arrow{r}{\mathrm{curlcurl}} & \hold^{\infty}(\R^{n};\R^{d}),
	\end{tikzcd}\]
	for suitable $d\in\mathbb{N}$, are sequences that are exact  at the corresponding mid point vector spaces. Here, we have set for $u=(u_{jk})_{1\leq j \leq N,\,1\leq k\leq n}$ and $v=(v_{jk})_{1\leq j,k\leq n}$ 
	\begin{align}\begin{split}
	&\curl(u)=\Big(\partial_{k}u_{ji}-\partial_{i}u_{jk} \Big)_{ijk},\\ & \curl\curl^{\top}(v)=\Big(\partial_{ij}v_{kl}+\partial_{kl}v_{ij}-\partial_{il}v_{kj}-\partial_{kj}v_{il} \Big)_{ijkl}.
	\end{split}
	\end{align}
	The first example is the usual gradient-curl-complex, whereas the second one is referred to as the Saint-Venant compatibility complex (see, e.g., \cite{Ciarlet}). In the language of Fourier analysis, this circumstance can be restated by the associated symbol complex 
	\[ \begin{tikzcd}
	V \arrow{r}{\mathbb{A}[\xi]} & W \arrow{r}{\mathbb{B}[\xi]} & X
	\end{tikzcd} \qquad \text{being exact at $W$ for all $\xi\in\R^{n}\setminus\{0\}$}\]
	for the corresponding choices $(\mathbb{A},\mathbb{B})=(D,\mathrm{curl})$ or $(\mathbb{A},\mathbb{B})=(\varepsilon,\mathrm{curlcurl}^{\top})$, respectively. This means that for each $\xi\in\R^{n}\setminus\{0\}$ we have $\mathbb{A}[\xi](V)=\ker(\mathbb{B}[\xi])$, where 
	\begin{align}\label{eq:form1}
	\A[\xi]=\sum_{|\alpha|=k}\xi^{\alpha}\mathbb{A}_{\alpha},\;\;\;\mathbb{B}[\xi]=\sum_{|\beta|=\ell}\xi^{\beta}\mathbb{B}_{\beta},\qquad \xi\in\R^{n}. 
	\end{align}
	In this situation we call $\mathbb{A}$ a \emph{potential} of $\mathbb{B}$, and $\mathbb{B}$ an \emph{annihilator} of $\mathbb{A}$. Annihilators are far from being uniquely determined: For instance, letting $\Delta$ be the usual Laplacian, each $\Delta^{j}\mathbb{B}$ ($j\in\mathbb{N}$) satisfies $\ker(\Delta^{j}\mathbb{B}[\xi])=\ker(|\xi|^{2j}\mathbb{B}[\xi])=\ker(\mathbb{B}[\xi])$ for any $\xi\in\R^{n}\setminus\{0\}$. Still, in the above examples with $(\mathbb{A},\mathbb{B})=(D,\curl)$ or $(\mathbb{A},\mathbb{B})=(\varepsilon,\curl\curl^{\top})$, the nullspaces of $\mathbb{B}$ and $\Delta^{j}\mathbb{B}$ differ by an infinite dimensional vector space. Thus, denoting the class of annihilators of a given differential operator $\A$ by 
	\begin{align}\label{eq:annihilatorclass}
	\mathrm{An}(\mathbb{A}):=\left\{\mathbb{B}\colon\;\begin{array}{c} \text{$\mathbb{B}$ is of the form~\eqref{eq:form1} for some vector space $X$,} \\ \ker(\mathbb{B}[\xi])=\mathbb{A}[\xi](V)\;\;\text{for all}\;\xi\in\R^{n}\setminus\{0\} \end{array}  \right\}, 
	\end{align}
	it is logical to ask for a subset $\mathcal{C}\subset \mathrm{An}(\mathbb{A})$ with the property that the distributional nullspaces of $\mathbb{B},\mathbb{B}'\in\mathcal{C}$ only differ by a finite dimensional vector space each, and under which conditions on $\mathbb{A}$ the class $\mathcal{C}$ is non-empty. 
	\subsection{Operators with constant rank over $\mathbb{C}$} 
	We first recall some terminology that is customary in the above context. Following the works of \textsc{Wilcox \& Schulenberger} \cite{WiSc} and \textsc{Murat} \cite{Murat} (also see \textsc{Fonseca \& M\"{u}ller} \cite{FoMu}), operators $\mathbb{A}$ or $\mathbb{B}$ of the form~\eqref{eq:form1} are said to be of \emph{constant rank (over $\R$)} provided $\dim_{\R}(\mathbb{A}[\xi](V))$ or $\dim_{\R}(\mathbb{B}[\xi](W))$, respectively, are independent of the phase space variable $\xi\in\R^{n}\setminus\{0\}$. By \textsc{Raita} \cite{Raita} (also see \cite{ArRaSi}), every constant rank operator possesses a constant rank potential. 
	
	Towards the above question from ~\eqref{eq:annihilatorclass}\emph{ff.}, a strengthening of the notion of constant rank is required:
	\begin{definition}[Constant rank over $\mathbb{C}$]\label{def:CCR}
		Let $\mathbb{B}$ be a differential operator as in \eqref{eq:diffops}. We say that $\mathbb{B}$ has \emph{constant rank over $\mathbb{C}$} provided 
		\begin{align}\label{eq:CCR}
		\dim_{\mathbb{C}}(\mathbb{B}[\xi](V+\mathrm{i}V))\quad\text{is independent of}\;\xi\in\mathbb{C}^{n}\setminus\{0\}. 
		\end{align}
	\end{definition}
	For~\eqref{eq:CCR}, note that whenever a complex phase variable $\xi=\mathrm{Re}(\xi)+\imag\mathrm{Im}(\xi)$ is inserted into $\mathbb{B}[\xi]$, it consequently gives rise to a linear map $\mathbb{B}[\xi]\colon V+\mathrm{i}V\to W+\mathrm{i}W$. Similarly as the constant rank operators generalise the notion of (overdetermined real) elliptic differential operators $\mathbb{A}$ \'{a} la \textsc{H\"{o}rmander} and \textsc{Spencer} \cite{Hoermander,Spencer}, operators of constant rank over $\mathbb{C}$ generalise the concept of $\mathbb{C}$-elliptic operators in the spirit of \textsc{Smith} \cite{Smith} (also see \cite{BDG,GR19,Kala1}). Here, an operator $\mathbb{A}$ is called \emph{(real or $\R$-) elliptic} provided $\mathbb{A}[\xi]\colon V\to W$ is injective for all $\xi\in\R^{n}\setminus\{0\}$, and \emph{$\mathbb{C}$-elliptic} provided $\mathbb{A}[\xi]\colon V+\imag V\to W+\imag W$ is injective for all $\xi\in\mathbb{C}^{n}\setminus\{0\}$. Adopting the terminology of Definition~\ref{def:CCR}, the main result of the present paper is as follows: 
	\begin{theorem}\label{thm:main}
		Let $\B$, $\widetilde{\B}$ be two differential operators with constant rank over $\C$. Then the following are equivalent:
		\begin{enumerate} 
			\item\label{item:main1} For all $\xi \in \C^n \setminus \{0\}$ we have
			\begin{align*}
			\ker(\B[\xi]) = \ker(\widetilde{\B}[\xi]).
			\end{align*}
			\item\label{item:main2} There exist two finite dimensional vector subspaces $\mathcal{X}_{1},\mathcal{X}_{2}$ of the $W$-valued polynomials on $\R^{n}$ such that 
			\begin{align}\label{eq:finitedim}
			\ker(\mathbb{B})+\mathcal{X}_{1}=\ker(\widetilde{\mathbb{B}})+\mathcal{X}_{2}, 
			\end{align}
			where $\ker$ is understood as the nullspace in $\mathscr{D}'(\R^{n};W)$, so e.g.
			\begin{align*}
			\ker(\mathbb{B})=\{T\in\mathscr{D}'(\R^{n};W)\colon\;\mathbb{B}T=0\}.
			\end{align*}
		\end{enumerate}
	\end{theorem}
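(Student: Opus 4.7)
For the direction (2) $\Rightarrow$ (1), I would test on complex exponentials. Fix $\xi \in \C^{n}\setminus\{0\}$ and $v \in \ker(\B[\xi])$; the smooth function $f(x) := \e^{\imag x\cdot\xi}\, v$ satisfies $\B f = \imag^{k} \e^{\imag x\cdot\xi} \B[\xi] v = 0$, so $f \in \ker(\B) \subseteq \ker(\widetilde{\B}) + \mathcal{X}_{2}$ by (2). Writing $f = g + P$ with $\widetilde{\B} g = 0$ and $P \in \mathcal{X}_{2}$ and applying $\widetilde{\B}$ yields
\begin{equation*}
\imag^{\ell}\, \e^{\imag x\cdot\xi}\, \widetilde{\B}[\xi] v \;=\; \widetilde{\B} P,
\end{equation*}
whose right-hand side is a polynomial of degree uniformly bounded in terms of $\mathcal{X}_2$. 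Since $\e^{\imag x\cdot\xi}$ is linearly independent from the space of polynomials whenever $\xi \neq 0$, we read off $\widetilde{\B}[\xi] v = 0$; the symmetric argument produces (1).

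The substantive direction (1) $\Rightarrow$ (2) is approached by routing both $\ker(\B)$ and $\ker(\widetilde{\B})$ through a common $\C$-elliptic potential. Using constant rank over $\C$, I would first construct---by a Moore--Penrose/adjugate construction tailored to the complex-rank setting, analogous to the potential constructions available in the $\R$-constant rank case---a $\C$-elliptic differential operator $\A\colon V \to W$ with
\begin{equation*}
\A[\xi](V + \imag V) \;=\; \ker(\B[\xi]) \qquad \text{for every } \xi \in \C^{n}\setminus\{0\}.
\end{equation*}
By the kernel equality in (1), the same $\A$ is simultaneously a $\C$-exact potential for $\widetilde{\B}$, and in particular $\A(\mathscr{D}'(\R^{n};V)) \subseteq \ker(\B) \cap \ker(\widetilde{\B})$. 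With this common potential in place, the theorem reduces to the distributional Poincar\'e-modulo-polynomials identity
\begin{equation*}
\ker_{\mathscr{D}'(\R^{n};W)}(\B) \;=\; \A\bigl(\mathscr{D}'(\R^{n};V)\bigr) + \mathcal{P},
\end{equation*}
with $\mathcal{P}$ a finite-dimensional space of $W$-valued polynomials depending only on $(\A,\B)$, together with its analogue for $\widetilde{\B}$; taking $\mathcal{X}_{1},\mathcal{X}_{2}$ to be the resulting polynomial complements delivers \eqref{eq:finitedim} immediately.

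The main obstacle is this Poincar\'e-modulo-polynomials lemma. My plan for it combines a polynomial left inverse of $\A$---a matrix polynomial $\A^{+}(\xi)$ and a scalar polynomial $q(\xi)$ with $\A^{+}(\xi)\,\A(\xi) = q(\xi)\,\id$, where $q$ is real-nonvanishing off the origin because $\A$ is in particular $\R$-elliptic---with H\"ormander's division theorem for tempered distributions. For a tempered $T \in \ker(\B)$ the relation $\B[\xi]\widehat{T} = 0$ places $\A^{+}(\xi)\widehat{T}$, away from $\xi = 0$, in the range of multiplication by $q(\xi)$, and one solves $q(\xi)\widehat{u} = \A^{+}(\xi)\widehat{T}$ modulo a distribution supported at the origin; the latter transforms back to a $W$-valued polynomial correction. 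Crucially, $\C$-ellipticity (rather than mere $\R$-ellipticity) forces $\ker_{\mathscr{D}'}(\A)$ to consist of polynomials of uniformly bounded degree, so that the total polynomial ambiguity (from the division step and from the non-uniqueness of $u$) has bounded degree and lives in a finite-dimensional $\mathcal{P}$. Non-tempered $T$ are handled by localising with smooth cut-offs, extending the localisations to $\mathscr{S}'$, and patching, absorbing discrepancies back into $\mathcal{P}$. The delicate point throughout is precisely this uniform, $T$-independent polynomial degree bound.
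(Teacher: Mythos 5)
Your direction \ref{item:main2}$\Rightarrow$\ref{item:main1} is fine and is essentially the paper's own argument (plane waves; you only need to split $\e^{\imag x\cdot\xi}v$ into real and imaginary parts, since $\xi,v$ may be complex while $\ker(\mathbb{B})$ consists of real $W$-valued distributions). The substantive direction \ref{item:main1}$\Rightarrow$\ref{item:main2}, however, rests on a first step that is false in general: a $\C$-elliptic potential $\A$ with $\A[\xi](V+\imag V)=\ker(\B[\xi])$ need not exist. Such an $\A$ would force $\dim_{\R}V=\dim_{\C}\ker(\B[\xi])$, and restricting to real $\xi$ the columns of $\A[\xi]$ would give a continuous global frame of the vector bundle $\xi\mapsto\ker(\B[\xi])\cap W$ over the unit sphere. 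For $\B=\operatorname{div}$ on $\R^{3}$ (which has constant rank over $\C$, cf.\ the divergence example in Section~2) this bundle is the tangent bundle of $S^{2}$, so the hairy ball theorem forbids any elliptic potential, let alone a $\C$-elliptic one. This is precisely why the constant-rank setting cannot be routed through the elliptic one; the paper instead avoids potentials altogether and uses a vectorial Nullstellensatz to factor $\xi^{\alpha}\widetilde{\B}[\xi]$ through $\B[\xi]$, obtaining $\nabla^{\tilde k}\circ\widetilde{\B}=\mathcal{B}\circ\B$ and then identifying $\ker(\nabla^{\tilde{k}}\circ\widetilde{\B})=\ker(\widetilde{\B})+(\text{finite-dimensional polynomials})$ by an elementary lemma.

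Even where a potential exists, your Poincar\'{e}-modulo-polynomials identity $\ker_{\mathscr{D}'}(\B)=\A(\mathscr{D}')+\mathcal{P}$ with \emph{finite-dimensional} $\mathcal{P}$ is exactly the crux and is not established. After the division step, $\A[\xi]\widehat{u}-\widehat{T}$ is only known to be supported at the origin, hence a finite linear combination of derivatives of $\delta_{0}$ whose order depends on $T$; without a uniform, $T$-independent degree bound the correction space $\mathcal{P}$ is infinite-dimensional and the theorem does not follow. You flag this yourself, but it is the whole difficulty, not a technical remainder. The reduction of general distributions to tempered ones by cut-offs and patching is likewise not routine: truncation destroys the constraint $\B T=0$, and gluing local potentials requires controlling $\ker(\A)$ on overlaps. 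As it stands, the proposal does not prove direction \ref{item:main1}$\Rightarrow$\ref{item:main2}.
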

	Let us note that if the Fourier symbols $\mathbb{B}[\xi]$ and $\widetilde{\mathbb{B}}[\xi]$ have the same nullspace for any $\xi$, then they are both annihilators of some differential operator $\A$ with constant rank in $\C$. Also note that the statement of Theorem \ref{thm:main} is false if we drop the assumption that $\B$ and $\widetilde{\B}$ satisfy the constant rank property over $\C$ (cf. Example \ref{Bilaplace}).
	
	In the language of algebraic geometry, the proof of Theorem~\ref{thm:main} relies on a vectorial Nullstellensatz to be stated and established in Section~\ref{sec:nullstellensatz} below. Nullstellensatz techniques have been employed in slightly different contexts (see \cite{Smith,Kala1,GRV21}). However, these by now routine applications to differential operators (to be revisited in detail in Section~\ref{sec:nullstellensatz}) do not prove sufficient to establish Theorem~\ref{thm:main}. 
	
	If a differential operator $\mathbb{A}$ has an annihilator $\mathbb{B}$ of constant complex rank, this annihilator is in some sense minimal when being compared with other annihilators (so e.g. $\mathbb{D}\circ\mathbb{B}$ for (real) elliptic operators $\mathbb{D}$ on $\R^{n}$ from $X$ to some finite dimensional real vector space $Y$). Thus, annihilators of constant complex rank -- provided existent -- are \emph{natural}. Even though the condition of constant rank over $\mathbb{C}$ appears quite restrictive, it is satisfied for a wealth of operators to be gathered below. As an interesting byproduct, such annihilators can be utilised to derive a Poincar\'{e}-type lemma in $n=2$ dimensions; see Section~\ref{sec:application} for this matter and related open questions in this context.
	
	We wish to point out that when preparing this note, a variant of the above theorem was also established in \textsc{H\"{a}rk\"{o}nen, Niklasson \& Raita} \cite[Thm.~1.2]{HNR}. However, the techniques employed to arrive at this statement are different from ours; specifically, our proof only hinges on the Hilbert Nullstellensatz and elementary linear algebra.
 
	\subsection{Organisation of the document}
	Apart from this introductory section, the paper is organised as follows: In Section~\ref{sec:examples} we gather examples of operators arising in applications that verify the constant rank condition over $\mathbb{C}$. Section~\ref{sec:nullstellensatz} then is devoted to a suitable variant of a vectorial Nullstellensatz, that displays the pivotal step in the proof of Theorem~\ref{thm:main} in Section~\ref{sec:main}. The paper then is concluded by a sample application on a two-dimensional Poincar\'{e}-type lemma in Section~\ref{sec:application}.
	\subsection{Notation}
	For $k\in\mathbb{N}$, we denote $\mathscr{P}_{k}(\R^{n};\R^{d})$ the $\R^{d}$-valued polynomials on $\R^{n}$ of degree at most $k$; the space of $\R^{d}$-valued polynomials $p$ on $\R^{n}$ which are homogeneous of degree $k$, so satisfy $p(\lambda x)=\lambda^{k}p(x)$ for all $\lambda\in\R$ and $x\in\R^{n}$, is denoted as $\mathscr{P}_{k}^{h}(\R^{n};\R^{d})$. Moreover, given a ring $\mathcal{R}$, we use the convention $I\unlhd\mathcal{R}$ to express that $I$ is an ideal in $\mathcal{R}$.
	{\small
		\subsection*{Acknowledgments.} The authors are thankful to Bogdan Raita for useful comments on a preliminary version of the paper. The authors are moreover grateful for fincancial support through the  Hector foundation (Project-Nr. 11962621) (F.G.) and to the DFG through the graduate school BIGS of the 
Hausdorff Center for Mathematics (GZ EXC 59 and 2047/1, Projekt-ID 390685813) (S.Sc.).}
	
	\section{(Non-)Examples of operators of constant rank over $\C$}\label{sec:examples}
	In this section we discuss some (non-)examples  that satisfy the algebraic condition of constant rank over $\C$ from Definition~\ref{def:CCR} and arise frequently in applications. 
	\begin{example}[$\mathbb{C}$-elliptic operators]
		$\mathbb{C}$-ellipticity of an operator $\mathbb{A}$ of the form \eqref{eq:form1} means that $\mathbb{A}[\xi]\colon V+\imag\!V\to W+\imag\!W$ is injective for any $\xi\in\mathbb{C}^{n}\setminus\{0\}$. Such operators have constant rank over $\C$ by definition; trivially, the usual $k$-th order gradients are $\C$-elliptic. As discussed e.g. in \cite[Ex.~2.2]{BDG}, the symmetric gradient $\varepsilon(u):=(\frac{1}{2}(\partial_{i}u_{j}+\partial_{j}u_{i}))_{ij}$ for maps $u\colon\R^{n}\to\R^{n}$ is $\C$-elliptic for $n\geq 2$, and so is the trace-free gradient $\varepsilon^{D}(u):=\varepsilon(u)-\frac{1}{n}\mathrm{div}(u)E_{n}$ with the $(n\times n)$-unit matrix $E_{n}$ provided $n\geq 3$. These operators play a crucial role in elasticity or fluid mechanics; see, e.g., \cite{FuchsSeregin}.
	\end{example}

	\begin{example}[The $\curl$- and $\curl\curl$-operator]\label{ex:curl}
		Given $n\geq 2$ and $u\colon\R^{n}\to\R^{N\times n}$, we define $\mathrm{curl}(u)$ as in the introduction. Note that, for $v\in\C^{N\times n}$, $\curl[\xi](v)=0$ for $\xi\in\C^{n}\setminus\{0\}$ if and only if $v=a\xi^{\top}$ for some $a\in\C^{N}$, so $\dim_{\C}(\ker(\curl[\xi]))=N$. Similarly, for the Saint-Venant-compatibility complex, one explicitely verifies that $\curl\curl[\xi](v)=0$ if and only if $v=a\odot\xi=\frac{1}{2}(a\xi^{\top}+\xi a^{\top})$ for some $a\in\C^{n}$. Thus $\dim_{\C}(\ker(\curl[\xi]))=n$, and the validity of the constant rank property follows.
	\end{example}
	\begin{example}[Divergence-type operators]
		For $n\geq 2$ and $u=(u_{1},...,u_{n})\colon\R^{n}\to\R^{n}$, the divergence $\mathrm{div}(u)=\sum_{i=1}^{n}\partial_{i}u_{i}$ has symbol $\mathrm{div}[\xi](v):=\sum_{i=1}^{n}\xi_{i}v_{i}$. Therefore, with $\xi\in\C^{n}\setminus\{0\}$, we have $\sum_{i=1}^{n}\xi_{i}v_{i}=0$ provided $v\in {\xi}^{\bot}$, and thus $\dim_{\mathbb{C}}(\ker(\mathrm{div}[\xi]))=n-1$. Hence $\mathrm{div}$ is of constant complex rank. An operator that arises in the relaxation of static problems, cf.~\cite{CMO20}, is the divergence of symmetric matrices; the same argument as above establishes that the divergence of symmetric matrices is of constant complex rank.  
	\end{example}
	\begin{example}[The Laplacian]\label{ex:laplace}
The (scalar) Laplacian $\mathbb{B}=\Delta$ does not satisfy the constant rank condition over $\mathbb{C}$. For instance, let  $n=2$. Writing $\xi=(\xi_{1},\xi_{2})^{\top}\in\mathbb{C}^{2}$, the relevant symbol in view of Definition~\ref{def:CCR} is 
\begin{align}\label{eq:LaplacesymbolC}
\begin{split}
\mathbb{B}[\xi]=\xi^{\top}\xi & = \xi_1^2 + \xi_2^2.
\end{split}
\end{align} 
The polynomial given by~\eqref{eq:LaplacesymbolC} vanishes if and only if $\xi\in\mathbb{C}(1,\imag)^{\top}$ or $\xi\in\mathbb{C}(1,-\imag)^{\top})$, and so 
 \[
		\ker_{\mathbb{C}}(\B[\xi]) = \begin{cases} \mathbb{C} & \text{if}\; \xi = \lambda (1 ,\imag)^{\top} \text{ or } \xi = \lambda (1,-\imag)^{\top}, ~\lambda \in \mathbb{C} \\	\{0\} &\text{otherwise,} \end{cases} 
		\]
and so the constant rank condition is violated over $\mathbb{C}$; still, over the base field $\R$ the Laplacian is elliptic and hence of constant rank over $\R$. 
	\end{example}
	
	\section{A Nullstellensatz for operators of constant complex rank}\label{sec:nullstellensatz}
	The proof of Theorem~\ref{thm:main} hinges on a variant of the Hilbert Nullstellensatz from algebraic geometry stated in Theorem~\ref{thm:main1} below. For the reader's convenience, let us first display a classical version of the Hilbert Nullstellensatz as a background tool, which may e.g. be found in~\cite[\S 4.1, Thm.~2]{Cox}:
	\begin{lemma}[HNS]\label{lem:HNS}
		Let $\mathbb{F}$ be an algebraically closed field and $\mathfrak{A}\unlhd \mathbb{F}[X_{1},...,X_{n}]$ an ideal. Then we have $\sqrt{\mathfrak{A}}=\mathcal{I}(V(\mathfrak{A}))$, where 
		\begin{itemize}
			\item[-] $\sqrt{\mathfrak{A}}:=\{\mathbf{x}\in\mathbb{F}[X_{1},...,X_{n}]\colon\;\exists m\in\mathbb{N}_{0}\colon\;\mathbf{x}^{m}\in\mathfrak{A}\}$ is the \emph{radical of $\mathfrak{A}$}, 
			\item[-] $V(\mathfrak{A}):=\{x=(x_{1},...,x_{n})\in\mathbb{F}^{n}\colon\;\forall \mathbf{x}\in\mathfrak{A}\colon\;\mathbf{x}(x)=0\}$ is the \emph{set of common zeros of $\mathfrak{A}$}, and 
			\item[-] $\mathcal{I}(V(\mathfrak{A})):=\{\mathbf{x}\in\mathbb{F}[X_{1},...,X_{n}]\colon\;\forall x\in V(\mathfrak{A})\colon\;\mathbf{x}(x)=0\}$.
		\end{itemize}
	\end{lemma}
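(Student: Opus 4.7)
My plan is to split the desired equality into the two inclusions. The easy direction $\sqrt{\mathfrak{A}}\subseteq \mathcal{I}(V(\mathfrak{A}))$ is purely formal: if $f^{m}\in\mathfrak{A}$ and $x\in V(\mathfrak{A})$ then $f(x)^{m}=0$, and since $\mathbb{F}$ is a field this forces $f(x)=0$. The content lies in the reverse inclusion, and I would approach it in the classical two-step fashion: first prove the \emph{weak Nullstellensatz}, asserting that every proper ideal has a common zero, and then bootstrap it to the strong form via the Rabinowitsch trick.

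For the weak form, given a proper ideal $\mathfrak{A}\unlhd\mathbb{F}[X_{1},\ldots,X_{n}]$, I would use Zorn's lemma to embed it in a maximal ideal $\mathfrak{m}$, so that $K:=\mathbb{F}[X_{1},\ldots,X_{n}]/\mathfrak{m}$ is a field that is simultaneously finitely generated as an $\mathbb{F}$-algebra. The key algebraic input is Zariski's lemma, which asserts that such a $K$ must be a finite algebraic extension of $\mathbb{F}$. Since $\mathbb{F}$ is algebraically closed, this forces $K=\mathbb{F}$, and the images $a_{i}\in\mathbb{F}$ of the $X_{i}$ furnish a tuple $a=(a_{1},\ldots,a_{n})\in\mathbb{F}^{n}$ satisfying $\mathbf{x}(a)=0$ for every $\mathbf{x}\in\mathfrak{m}\supseteq\mathfrak{A}$, proving $V(\mathfrak{A})\neq\emptyset$.

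With the weak form established, I deduce the strong form as follows. Let $f\in\mathcal{I}(V(\mathfrak{A}))$ and introduce a new indeterminate $Y$; consider the ideal $\mathfrak{A}'\unlhd\mathbb{F}[X_{1},\ldots,X_{n},Y]$ generated by $\mathfrak{A}$ together with $1-Yf$. Any common zero of $\mathfrak{A}'$ would project to a zero of $\mathfrak{A}$ at which $f$ both vanishes (since $f\in\mathcal{I}(V(\mathfrak{A}))$) and satisfies $Yf=1$, which is absurd; hence $V(\mathfrak{A}')=\emptyset$. By the weak Nullstellensatz, $\mathfrak{A}'=\mathbb{F}[X_{1},\ldots,X_{n},Y]$, so there is a relation $1=\sum_{i}g_{i}(X,Y)h_{i}(X)+g(X,Y)(1-Yf(X))$ with $h_{i}\in\mathfrak{A}$. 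Substituting $Y=1/f$ in the field of rational functions and clearing denominators by multiplying through with a suitable power $f^{m}$ yields $f^{m}\in\mathfrak{A}$, hence $f\in\sqrt{\mathfrak{A}}$. The genuine obstacle in the whole argument is Zariski's lemma; I would prove it via Noether normalisation by arguing that if $K=\mathbb{F}[a_{1},\ldots,a_{n}]$ were transcendental over $\mathbb{F}$ then there would exist algebraically independent $y_{1},\ldots,y_{r}$ ($r\geq 1$) with $K$ integral over $\mathbb{F}[y_{1},\ldots,y_{r}]$, but integrality of a field over a subring forces that subring to be a field, contradicting that the polynomial ring $\mathbb{F}[y_{1},\ldots,y_{r}]$ is not one.
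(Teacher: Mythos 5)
Your argument is correct, and it is the standard complete proof of the Hilbert Nullstellensatz: the formal inclusion $\sqrt{\mathfrak{A}}\subseteq\mathcal{I}(V(\mathfrak{A}))$, the weak Nullstellensatz obtained from a maximal ideal $\mathfrak{m}\supseteq\mathfrak{A}$ together with Zariski's lemma (so that $\mathbb{F}[X_{1},\ldots,X_{n}]/\mathfrak{m}=\mathbb{F}$ and the quotient map is evaluation at a point of $V(\mathfrak{A})$), and the Rabinowitsch trick with the auxiliary variable $Y$ and the relation $1-Yf$ to pass to the strong form; your reduction of Zariski's lemma to Noether normalisation plus the fact that a domain over which a field is integral is itself a field is also the usual route. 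Note, however, that the paper does not prove this lemma at all: it is quoted as classical background and referred to \cite[\S 4.1, Thm.~2]{Cox}, so there is no internal proof to compare with; your write-up is essentially a self-contained reproduction of the textbook argument behind that citation. Two trivial boundary cases are worth a half-sentence each if you want full rigour: if $f=0$ then $f\in\sqrt{\mathfrak{A}}$ immediately (so the substitution $Y=1/f$ in the fraction field is legitimate in the remaining cases), and if $\mathfrak{A}$ is the unit ideal then $V(\mathfrak{A})=\emptyset$ and both sides of the asserted equality are the whole polynomial ring, so nothing is to be proved there.
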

	The standard use of this result in the context of differential operators (see Remark~\ref{rem:insuff} below) does not prove sufficient for Theorem~\ref{thm:main}. Hence let $d,k,l\in\mathbb{N}$. For $i\in\{1,...,d\}$ and $j\in\{1,...,l\}$ we consider homogeneous polynomials $p_{ij} \in \C[\xi_1,...,\xi_n]$ of order $k$ and the system of equations 
	\begin{equation}\label{def:system}
	\sum_{i=1}^d p_{ij}(\xi)v_i =0, \qquad\xi=(\xi_{1},...,\xi_{n})\in\mathbb{C}^{n},\; j\in\{1,...,l\}, 
	\end{equation}
	where $v=(v_{1},...,v_{d})\in\mathbb{C}^{d}$. In accordance with Definition~\ref{def:CCR}, we say that the system \eqref{def:system} satisfies the \emph{constant rank property over $\C$} if there exists an $r \in \{0,...,d\}$ such that for every $\xi \in \C^n \setminus \{0\}$ the vector space \[
	\mathcal{X}_{\xi}((p_{ij})_{ij}):=\left\{ v=(v_{1},...,v_{d})\in \C^d \colon \sum_{i=1}^d p_{ij}(\xi) v_i = 0 ~ \text{for all}\; j\in\{1,...,l\} \right\}
	\]
	has dimension $(d-r)$ over $\C$. We may now state the main ingredient for the proof of Theorem~\ref{thm:main}, which arises as a generalisation of the usual Hilbert Nullstellensatz:
	\begin{theorem}[Vectorial Nullstellensatz for constant rank operators] \label{thm:main1}
		Let $d,k,l\in\mathbb{N}$ and, for $i\in\{1,...,d\}$ and  $j\in\{1,...,l\}$, $p_{ij} \in  \C[\xi_1,...,\xi_n]$ be homogeneous polynomials of degree $k$ such that ~\eqref{def:system} satisfies the \emph{constant rank property over $\C$}. Let $b_1,...,b_d \in \C[\xi_1,...,\xi_n]$, $v=(v_{1},...,v_{d})\in\mathbb{C}^{d}$ and define \[
		B[\xi] (v):= \sum_{i=1}^{d} v_i b_i(\xi).
		\]
		Suppose that for any $\xi=(\xi_{1},...,\xi_{n})\in \C^n \backslash \{0\}$ and $v=(v_{1},...,v_{d})\in \C^d$ we have that 
		\begin{align}\label{eq:NSTSatz0}
		\Big(\sum_{i=1}^d p_{ij}(\xi) v_i = 0 \;\;\text{for all}\; j\in\{1,..,l\}\Big) \quad \Longrightarrow \quad B[\xi](v) =0,
		\end{align}
		and let $q \in \C[\xi_1,...,\xi_n]$ be a homogeneous polynomial of degree $\geq 1$. Then there exist polynomials $h_j \in \C[\xi_1,...,\xi_n]$, $j\in\{1,...,l\}$, and an $m \in \N$, such that for all $\xi\in\C^{n}$ and all $v\in\C^{d}$ there holds
		\begin{align}\label{eq:NSTSatz}
		q^{m}(\xi) B[\xi](v) = \sum_{j=1}^l h_j(\xi) \sum_{i=1}^d v_i p_{ij}(\xi).
		\end{align}
	\end{theorem}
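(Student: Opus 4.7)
The plan is to reinterpret the sought identity~\eqref{eq:NSTSatz} as the vanishing of a single element in a finitely generated module over $\C[\xi_{1},\ldots,\xi_{n}]$ and then to conclude via the classical Hilbert Nullstellensatz (Lemma~\ref{lem:HNS}). Organise the data into the $l\times d$ matrix $M\in\C[\xi_{1},\ldots,\xi_{n}]^{l\times d}$ with rows $M_{j}=(p_{1j},\ldots,p_{dj})$ and the row vector $b=(b_{1},\ldots,b_{d})$. Since both sides of \eqref{eq:NSTSatz} are $\C$-linear in $v$, the identity is equivalent (by matching coefficients of $v_{i}$) to the inclusion $q^{m}b\in\operatorname{rowspan}_{\C[\xi]}(M)$; writing $\bar b$ for the class of $b$ in $N:=\C[\xi]^{d}/\operatorname{rowspan}_{\C[\xi]}(M)$, the goal becomes to show that the annihilator $\mathfrak{A}:=\operatorname{Ann}_{\C[\xi]}(\bar b)$ contains a power of every homogeneous polynomial of degree $\geq 1$.

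First I would verify that $b\in\operatorname{rowspan}_{\C(\xi)}(M)$ over the field of rational functions. Writing $r$ for the constant complex rank, some $r\times r$ minor of $M$ is non-zero at each $\xi\neq 0$ and hence is a non-zero polynomial, while every $(r+1)\times(r+1)$ minor of $M$ vanishes on $\C^{n}\setminus\{0\}$ and, being a polynomial, must vanish identically; thus $\operatorname{rank}_{\C(\xi)}(M)=r$. Hypothesis~\eqref{eq:NSTSatz0} forces the augmented matrix $\binom{M}{b}$ to have rank $r$ on $\C^{n}\setminus\{0\}$ too, so the same minor argument gives $b\in\operatorname{rowspan}_{\C(\xi)}(M)$. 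Cramer's rule then clears denominators: if $J\subseteq\{1,\ldots,l\}$ and $I\subseteq\{1,\ldots,d\}$ both have size $r$ with $\Delta_{JI}:=\det(M_{JI})$ a non-zero polynomial, the rows $\{M_{j}\}_{j\in J}$ form a basis of the row space over $\C(\xi)$, and restricting the unique expansion $b=\sum_{j\in J}\mu_{j}M_{j}$ to the columns $I$ and inverting $M_{JI}$ yields $\Delta_{JI}\mu_{j}\in\C[\xi]$. Therefore $\Delta_{JI}\cdot b\in\operatorname{rowspan}_{\C[\xi]}(M)$, i.e.\ $\Delta_{JI}\in\mathfrak{A}$.

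The second key step is the localisation: for any $\xi_{0}\in\C^{n}\setminus\{0\}$, the constant rank hypothesis guarantees that some $r\times r$ minor $\Delta_{JI}$ is non-zero at $\xi_{0}$, so combined with the previous paragraph one obtains $V(\mathfrak{A})\subseteq\{0\}\subseteq\C^{n}$. Lemma~\ref{lem:HNS} then yields
\[
\sqrt{\mathfrak{A}}\;=\;\mathcal I(V(\mathfrak{A}))\;\supseteq\;\mathcal I(\{0\})\;=\;(\xi_{1},\ldots,\xi_{n}).
\]
Any homogeneous $q$ of degree $\geq 1$ vanishes at the origin and therefore lies in $(\xi_{1},\ldots,\xi_{n})\subseteq\sqrt{\mathfrak{A}}$; hence some power $q^{m}$ belongs to $\mathfrak{A}$, which translates back to~\eqref{eq:NSTSatz} after extending the $h_{j}$ by zero for $j\notin J$.

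The crux, and the step where the constant rank hypothesis over $\C$ is indispensable, is the localisation in the preceding paragraph: only there can one guarantee that every $\xi_{0}\neq 0$ is separated from $V(\mathfrak{A})$ by an annihilating minor. Without this property, $\mathfrak{A}$ could cut out a strictly larger variety and the Nullstellensatz step would fail for a generic $q$, as already occurs for the scalar Laplacian (cf.\ Example~\ref{ex:laplace}).
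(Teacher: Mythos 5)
Your proof is correct, and its skeleton coincides with the paper's: both arguments rest on the observation that the $r\times r$ minors of $(p_{ij})$ have $\{0\}$ as their only common zero, both invoke the Hilbert Nullstellensatz (Lemma~\ref{lem:HNS}) to place a power of $q$ in the relevant ideal, and both ultimately reduce to the Cramer-type fact that $\Delta_{JI}\cdot b$ lies in the $\C[\xi_1,\ldots,\xi_n]$-row span of $(p_{ij})$ whenever $\Delta_{JI}$ is a non-vanishing $r\times r$ minor. The difference is in how the two halves are packaged. You phrase the goal as $q^m\in\operatorname{Ann}_{\C[\xi]}(\bar b)$ for $\bar b$ in the quotient module, obtain the Cramer identity by first establishing $b\in\operatorname{rowspan}_{\C(\xi)}(M)$ over the fraction field (via the rank of the augmented matrix) and then clearing denominators with the adjugate of $M_{JI}$, and apply the Nullstellensatz directly to the annihilator ideal. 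The paper instead applies the Nullstellensatz to write $q^m$ explicitly as a combination of the minors (its identity \eqref{eq:terminator}), defines the $h_j$ by hand through the column-replaced matrices $M_{IJ}^{\gamma}$, and verifies the Cramer identity \eqref{eq:Claim1} by Laplace-expanding a bordered $(r+1)\times(r+1)$ determinant that vanishes because of hypothesis \eqref{eq:NSTSatz0}. Your route is more abstract and spares the explicit determinant computation, at the cost of leaving the $h_j$ non-constructive; the two uses of the constant-rank hypothesis over $\C$ (localising the zero set of the minors at the origin, and forcing the augmented matrix to have rank exactly $r$ off the origin) are the same in both proofs. The only cosmetic blemishes are the degenerate case $r=0$ (where $b\equiv 0$ and the claim is trivial) and the final remark about "extending the $h_j$ by zero", which is unnecessary since $q^m b\in\operatorname{rowspan}_{\C[\xi]}(M)$ already produces coefficients for all $j\in\{1,\ldots,l\}$; neither affects correctness.
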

	\begin{proof}
		Let the polynomials $p_{ij}$ satisfy the constant rank property for some fixed $r \in \{0,...,d\}$. We  define sets  \[
		\mathcal{J}= \{ J \subset \{1,...,l\} \colon \vert J \vert =r\}, \quad \mathcal{I} = \{ I \subset \{1,...,d\} \colon \vert I \vert=r\}. 
		\]
		For a subset $J \in \mathcal{J}$ we write $J = \{j(1),...,j(r)\}$ for $j(1)<...<j(r)$ and likewise for $I \in \mathcal{I}$, $I=\{i(1),...,i(r)\}$ for $i(1) <...<i(r)$. Define the matrix $M_{IJ}\in\C^{r\times r}$ by its entries via \[
		(M_{IJ})_{\beta \gamma} := p_{i(\beta),j(\gamma)}.
		\]
		Now consider an arbitrary $(r\times r)$-minor of $P(\xi)=(p_{ij}(\xi))_{ij}$; any such minor arises as $\det(M_{IJ}(\xi))$ for some $I\in\mathcal{I},J\in\mathcal{J}$. If $\xi\in\C^{n}\setminus\{0\}$ is a common zero of all $q_{IJ}:=\det(M_{IJ})$, then $\dim_{\C}(\mathcal{X}_{\xi}((p_{ij})_{ij}))\neq d-r$ by virtue of the constant rank property over $\C$. On the other hand, by homogeneity of the $p_{ij}$'s, $\xi=0$ is a common zero of the $q_{IJ}$'s, and so is the only common zero of the $q_{IJ}$'s. 
		
		On the other hand, $\xi=0$ is a zero of any homogeneous polynomial $q\in\C[\xi_{1},...,\xi_{n}]$ of degree $\geq 1$. Thus, the Hilbert Nullstellensatz from Lemma~\ref{lem:HNS} implies the existence of an $m\in\mathbb{N}$ and polynomials $g_{IJ}\in\C[\xi_{1},...,\xi_{n}]$ ($I\in\mathcal{I},J\in\mathcal{J}$) such that \begin{equation}\label{eq:terminator}
		q^{m}= \sum_{J \in \mathcal{J}} \sum_{I \in \mathcal{I}} g_{IJ} \det(M_{IJ}).
		\end{equation}
		We now come to the definition of $h_j$ as appearing in \eqref{eq:NSTSatz}. For the matrix $M_{IJ}$ and $\gamma \in \{1,...,r\}$, we define the matrix $M_{IJ}^{\gamma}$ as the matrix where the $\gamma$-th column vector is replaced by $(b_{i(\beta)})_{\beta=1,...,r}$, i.e.,
		\[
		M_{IJ}^{\gamma} = \left( \begin{array}{ccccccc} p_{i(1)j(1)} &... &p_{i(1)j(\gamma-1)}& b_{i(1)} & p_{i(1)j(\gamma +1)} &...&p_{i(1)j(r)} \\
		...& &...&...&...& &... \\
		p_{i(r)j(1)} &... &p_{i(r)j(\gamma-1)}& b_{i(r)} & p_{i(r)j(\gamma +1)} &...&p_{i(r)j(r)} \end{array} \right). \]
		We then define for $j\in\{1,...,l\}$
		\begin{align}\label{eq:hjdef}
		h_j := \sum_{\gamma=1}^r \sum_{I \in \mathcal{I}} \sum_{J \in \mathcal{J} \colon j(\gamma)=j} g_{IJ} \det(M_{IJ}^{\gamma})
		\end{align}
		and claim that 
		\begin{align} \label{eq:Claim1}
		& \sum_{\gamma=1}^r p_{i j(\gamma)} \det(M_{IJ}^{\gamma}) = b_i \det M_{IJ}\qquad\text{for all}\;i\in\{1,...,d\}, \\ 
		&   \sum_{j=1}^l h_j \left( \sum_{i=1}^d p_{ij} v_i \right) = q^m \sum_{i=1}^d b_i v_i,  \label{eq:Claim2}
		\end{align}
		so that the $h_{j}$'s will satisfy \eqref{eq:NSTSatz}. Let us see how~\eqref{eq:Claim2}  follows from~\eqref{eq:Claim1}: In fact, \begin{align*}
		\sum_{j=1}^l h_j \left( \sum_{i=1}^d p_{ij} v_i \right) &\stackrel{\eqref{eq:hjdef}}{=} \sum_{j=1}^l \sum_{i=1}^d \sum_{\gamma=1}^r \sum_{I \in \mathcal{I}} \sum_{J \in \mathcal{J} \colon j(\gamma)=j}
		g_{IJ} \det(M_{IJ}^{\gamma}) p_{ij} v_i \\
		&\;= \sum_{J \in \mathcal{J}} \sum_{I \in \mathcal{I}} g_{IJ} \left( \sum_{i=1}^d \sum_{\gamma=1}^r p_{ij(\gamma)} \det(M_{IJ}^{\gamma}) v_i \right) \\
		&\stackrel{\eqref{eq:Claim1}}{=}\sum_{J \in \mathcal{J}} \sum_{I \in \mathcal{I}} g_{IJ} \det(M_{IJ}) \cdot \left( \sum_{i=1}^d b_i v_i \right) \\
		&\stackrel{\eqref{eq:terminator}}{=} q^m  \sum_{i=1}^d b_i v_i.
		\end{align*}
		Hence it remains to show~\eqref{eq:Claim1}. To this end, for $\beta,\gamma \in\{1,...,r\}$ let us define the matrix $M_{I(\beta)J(\gamma)}$ as the $(r-1) \times (r-1)$ matrix, where the $\gamma$-th  column of $M_{IJ}$ and the $\beta$-th row have been removed. By the Laplace expansion formula and the definition of $M_{IJ}^{\gamma}$, we then obtain
		\[
		\det(M_{IJ}^{\gamma}) = \sum_{\beta=1}^r (-1)^{\beta+\gamma} b_{i(\beta)} \det(M_{I(\beta)J(\gamma)}). \]
		Hence, 
		\begin{equation} \label{eq:zwischen}
		\sum_{\gamma=1}^r p_{i j(\gamma)} \det(M_{IJ}^{\gamma}) = \sum_{\beta,\gamma=1}^r (-1)^{\beta+\gamma} b_{i(\beta)} \det(M_{I(\beta)J({\gamma})}) p_{i j(\gamma)}.
		\end{equation}
		Now consider the $(r+1) \times (r+1)$-matrix $M$ defined by \[
		M := \left( \begin{array}{cccc}
		p_{i(1)j(1)} &\hdots& p_{i(1)j(r)} & b_{i(1)} \\
		\vdots & \ddots& \vdots & \vdots \\
		p_{i(r)j(1)} & ... & p_{i(r)j(r)} & b_{i(r)} \\
		p_{i j(1)} & ... & p_{i j(r)} & b_{i} \end{array} \right).
		\]
		By \eqref{eq:NSTSatz0}, for each $\xi\in\mathbb{C}^{n}\setminus\{0\}$ the subspace of $v \in \C^d$ such that \[
		\sum_{i=1}^d p_{ij}(\xi)v_{i} =0~\text{for all}\; j\in\{1,...,l\},\quad\quad\sum_{i=1}^d v_i b_i (\xi) =0 \] 
			is $\mathcal{X}_{\xi}((p_{ij})_{ij})$ and thus has dimension $(d-r)$. Therefore, all $(r+1)\times(r+1)$ minors of the matrix corresponding to these linear equations vanish. In particular, the determinant of the matrix $M$ is $0$. Denote by $M^{\beta}$ the $(r \times r)$-submatrix of $M$,  where the last column and the $\beta$-th row of $M$ are eliminated. We apply the Laplace expansion formula twice to $M$ (in the last column and then in the last row), to see that \begin{align*}
		0 &= \det(M) \\
		&= \left( \sum_{\beta=1}^r b_{i(\beta)} (-1)^{r+1+\beta} \det (M^{\beta}) \right) + b_i \det(M_{IJ}) \\
		&= \left( \sum_{\gamma=1}^r \sum_{\beta=1}^r (-1)^{r+1+\beta} (-1)^{r+\gamma} b_{i(\beta)} p_{ij(\gamma)} \det(M_{I(\beta)J(\gamma)}) \right)+ b_i \det(M_{IJ}).
		\end{align*}
		Therefore, \[
		b_i \det(M_{IJ}) = \sum_{\gamma=1}^r \sum_{\beta=1}^r(-1)^{\beta+\gamma} b_{i(\beta)} p_{ij(\gamma)} \det(M_{I(\beta)J(\gamma)}),
		\]
		which establishes \eqref{eq:Claim1}. The proof is complete. 
	\end{proof}
	\begin{remark}\label{rem:insuff}
		In the context of differential operators, the Hilbert Nullstellensatz is typically applied to $\mathbb{C}$-elliptic differential operators $\mathbb{A}$ as follows (cf.~\cite{Smith}, \cite[Lem.~4, Thm.~5]{Kala1}, \cite[Prop.~3.2]{GRV21}): Let $V\cong\R^{N}$, $W\cong\R^{m}$ and $\mathbb{A}$ be a first order differential operator on $\R^{n}$ from $V$ to $W$. Then $\mathbb{C}$-ellipticity of $\mathbb{A}$ implies by virtue of the Hilbert Nullstellensatz that there exists $k\in\mathbb{N}$ with the following property: There exists a linear, homogeneous differential operator $\mathbb{L}$ on $\R^{n}$ from $W$ to $V\odot^{k}\R^{n}$ of order $(k-1)$  such that $D^{k}=\mathbb{L}\mathbb{A}$. Inserting this relation into the usual Sobolev integral representation of $u\in\hold^{\infty}(\overline{\ball_{1}(0)};V)$ (cf.~\cite[\S 4]{Adams} or \cite[Thm.~1.1.10.1]{Mazya}) and integrating by parts then yields a polynomial $P$ of order $(k-1)$ such that 
		\begin{align*}
		u(x)=P(x) + \int_{\ball_{1}(0)}K(x,y)\mathbb{A}u(y)\dif y
		\end{align*}
		for all $x\in\ball_{1}(0)$ and all $u\in\hold^{\infty}(\overline{\ball_{1}(0)};V)$; here, the function $K\colon \ball_{1}(0)\times\ball_{1}(0)\to \mathscr{L}(W;V)$ is a suitable integral kernel. This, in particular, implies that $\dim(\ker(\mathbb{A}))<\infty$. 
		In our situation, a similar approach does not work. This is so because the operators $\mathbb{B},\widetilde{\mathbb{B}}$ from Theorem~\ref{thm:main} do not have finite dimensional nullspaces themselves; we may only assert that the nullspaces differ by finite dimensional vector spaces, and this is why we require the refinement provided by Theorem~\ref{thm:main1}.
	\end{remark}

	\section{Proof of Theorem~\ref{thm:main}}\label{sec:main}
	Based on Theorem~\ref{thm:main1}, the proof of  Theorem~\ref{thm:main} requires two additional ingredients that we record next:
	\begin{lemma} \label{aux}
		Let $\A \colon \hold^{\infty}(\R^n;\R^d) \to \hold^{\infty}(\R^n ;\R^l)$ be a homogeneous differential operator of order $k$. Define the differential operator 
\begin{align*}		
		\nabla \circ \A \colon \hold^{\infty}(\R^n;\R^d) \to \hold^{\infty}(\R^n ;\R^l \times \R^n)
\end{align*}		
		 componentwisely by \[
		((\nabla \circ \A) u)_i = \partial_i \A u,\qquad i\in\{1,...,n\}.
		\]
		Then we have
		\begin{align}\label{eq:kernablarep}
		\ker (\nabla \circ \A) = \ker(\A) + \mathscr{P}_{k}(\R^{n};\R^{d}). 
		\end{align}
	\end{lemma}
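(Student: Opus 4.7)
The plan is to prove the two inclusions in~\eqref{eq:kernablarep} separately. The easy direction ``$\supseteq$'' is immediate: if $u \in \ker(\A)$ then $\nabla \A u = 0$ tautologically, and if $p \in \mathscr{P}_{k}(\R^{n};\R^{d})$, homogeneity of $\A$ as a $k$-th order operator forces $\A p$ to be a polynomial of degree at most zero, i.e.\ a constant, so $\nabla \A p = 0$.

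For the reverse direction ``$\subseteq$'', let $u$ satisfy $\nabla \A u = 0$. Then $\A u$ is a distribution (or smooth function) whose every partial derivative vanishes on the connected domain $\R^{n}$, hence $\A u = c$ for some constant $c \in \R^{l}$. Thus the task reduces to producing a polynomial $p \in \mathscr{P}_{k}(\R^{n};\R^{d})$ with $\A p = c$, for then $u - p \in \ker(\A)$ yields the decomposition $u = (u-p) + p \in \ker(\A) + \mathscr{P}_{k}$.

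To construct such a $p$, I would mollify $u$ to obtain $u_{\varepsilon} := u * \rho_{\varepsilon} \in \hold^{\infty}(\R^{n};\R^{d})$ for a standard mollifier $\rho_{\varepsilon}$. Since $\A$ commutes with convolution and constants are preserved by mollification, $\A u_{\varepsilon} = c$ pointwise on $\R^{n}$. Setting
\begin{align*}
p(x) := \sum_{|\alpha|=k} \partial^{\alpha} u_{\varepsilon}(0)\, \frac{x^{\alpha}}{\alpha!} \in \mathscr{P}_{k}^{h}(\R^{n};\R^{d})
\end{align*}
gives a homogeneous polynomial of degree $k$ with $\partial^{\alpha} p \equiv \partial^{\alpha} u_{\varepsilon}(0)$ for $|\alpha|=k$, so that
\begin{align*}
\A p \equiv \sum_{|\alpha|=k} \A_{\alpha} \partial^{\alpha} u_{\varepsilon}(0) = (\A u_{\varepsilon})(0) = c,
\end{align*}
as desired.

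The only conceptual point that requires an argument is that a distributional solution of $\A u = c$ forces $c$ to lie in $\A(\mathscr{P}_{k}^{h})$; this is handled transparently by the above mollification-plus-top-order-Taylor-polynomial construction, and no deeper algebraic input (such as the Nullstellensatz of Section~\ref{sec:nullstellensatz}) is needed here, since we are only solving a single linear constraint on the top-degree Taylor coefficients of $p$.
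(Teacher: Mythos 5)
Your proof is correct, and both directions are handled soundly: the reduction to $\A u\equiv c$ constant, the observation that $\A$ maps $\mathscr{P}_{k}(\R^{n};\R^{d})$ into constants, and the verification that $\partial^{\alpha}p\equiv\partial^{\alpha}u_{\varepsilon}(0)$ for $|\alpha|=k$ so that $\A p=(\A u_{\varepsilon})(0)=c$ are all fine (any fixed $\varepsilon$ does the job, since you only need one polynomial preimage of $c$). The overall strategy coincides with the paper's -- reduce to $\A u=c$ and then exhibit a degree-$k$ polynomial $p$ with $\A p=c$ -- but the way you produce $p$ is different and a bit slicker. The paper introduces the space $W=\spano\{\A[\xi](\R^{d})\colon\xi\in\R^{n}\}$, argues that $\A u$ takes values in $W$ pointwise, and then proves the abstract identity $W=\A\mathscr{P}_{k}^{h}(\R^{n};\R^{d})$ by testing the symbol against the monomials $x^{\beta}v/\beta!$; this buys a description of exactly which constants are attainable, which is mildly more information than needed. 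You instead read the polynomial off the top-order Taylor coefficients of the (mollified) solution itself, which makes the preimage explicit and automatically lands on the right constant without ever identifying $W$. Your mollification step also transparently covers the distributional/$\lebe^{1}_{\locc}$ setting in which the lemma is actually applied later (Corollary~\ref{coro:kernels}), whereas the statement as printed is for $\hold^{\infty}$, where mollification would be superfluous. No gaps.
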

Observe that this result \emph{does not} require the constant rank property.
	\begin{proof}
		Suppose that $u \in \ker(\nabla \circ \A)$. Then $\A u$ is a constant function. Consider the space $W \subset \R^l$ defined by $W := \spano\{ \A[\xi](\R^{d})\colon \xi\in\R^{n}\}$. Note that, on the one hand, $\A u \in W$ pointwisely, and, on the other hand, 
		\begin{align}\label{eq:Wdef}
		\begin{split}
		W & = \A\mathscr{P}_{k}^{h}(\R^{n};\R^{d})=\A\mathscr{P}_{k}(\R^{n};\R^{d}).
		\end{split}
		\end{align}
		The last line can be seen by considering, for $|\beta|=k$ and $v\in\R^{d}$, the polynomials $p_{\beta}(x):=\tfrac{x^{\beta}}{\beta!}v$. Then, for any $\xi\in\R^{n}$, 
		\begin{align*}
		\A\Big(\sum_{|\beta|=k}\xi^{\beta}p_{\beta}\Big) = \sum_{|\alpha|=k}\sum_{|\beta|=k}\xi^{\beta}\A_{\alpha}\partial^{\alpha}p_{\beta} = \sum_{|\alpha|=k}\xi^{\alpha}\A_{\alpha}v
		\end{align*}
		and so \eqref{eq:Wdef} follows by the homogeneity of $\A$ of degree $k$. In particular, for every $u \in \ker(\nabla \circ \A)$, we can find a polynomial $p$ of degree $k$ with $\A (u-p)=0$. Hence $\ker(\nabla\circ\A)\subset \ker(\A)+\mathscr{P}_{k}(\R^{n};\R^{d})$. On the other hand, since $\A$ is homogeneous and of order $k$, every element of $\ker(\A)+\mathscr{P}_{k}(\R^{n};\R^{d})$ belongs to the nullspace of $\nabla\circ\A$. Thus~\eqref{eq:kernablarep} follows and the proof is complete. 
	\end{proof}
	\begin{corollary}[Kernels of annihilators] \label{coro:kernels}
		Let $\A^{(1)}$ and $\A^{(2)}$ be two homogeneous  differential operators of order $k^{(1)}$ and $k^{(2)}$, which have constant rank over $\C$ and both act on $\hold^{\infty}(\R^n;\R^d)$. Moreover, suppose that their Fourier symbols satisfy
		\begin{align}\label{eq:nullspaceinc1}
		\ker(\A^{(1)}[\xi]) \subset \ker(\A^{(2)}[\xi])\qquad\text{for all}\;\xi\in\C^{n}.
		\end{align}
		Then the following hold:
		\begin{enumerate}
			\item\label{item:inc1} There exists $\tilde{k} \in \N$ and a differential operator $\mathcal{B}$, such that \[
			\nabla^{\tilde{k}} \circ \A^{(2)} = \mathcal{B} \circ \A^{(1)}.
			\]
			\item\label{item:inc2}  For the nullspace of $\A^{(1)}$ we have \[
			\{u \in \lebe^1_{\locc} \colon \A^{(1)} u= 0 \} \subset \{ u \in \lebe^1_{\locc} \colon \A^{(2)} u =0 \}  + V,
			\]
			where $V$ is a finite dimensional vector space (consisting of polynomials).
			\item\label{item:inc3}
			If, in addition, \[ \ker(\A^{(1)}[\xi]) = \ker(\A^{(2)}[\xi]),
			\] then we may write  \[
			\{u \in \lebe^1_{\locc} \colon \A^{(1)} u= 0 \} + V = \{ u \in \lebe^1_{\locc} \colon \A^{(2)} u =0 \}  + W
			\]
			for finite dimensional vector spaces $V$ and $W$ consisting of polynomials.
			
		\end{enumerate}
	\end{corollary}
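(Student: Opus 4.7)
The plan is to deduce part~\ref{item:inc1} from Theorem~\ref{thm:main1}, after which parts~\ref{item:inc2} and~\ref{item:inc3} follow quickly from Lemma~\ref{aux} and a symmetry argument. For part~\ref{item:inc1}, I would write $p_{ij}(\xi) := (\A^{(1)}[\xi])_{ji}$ for the entries of the symbol of $\A^{(1)}$, which are homogeneous of degree $k^{(1)}$, and $b_{i}^{(\beta)}(\xi) := (\A^{(2)}[\xi])_{\beta i}$ for the coefficients of the $\beta$-th component of the symbol of $\A^{(2)}$. The constant rank hypothesis on $\A^{(1)}$ ensures that the associated system~\eqref{def:system} satisfies the constant rank property over $\C$, while the symbol-kernel inclusion~\eqref{eq:nullspaceinc1} is exactly the implication~\eqref{eq:NSTSatz0} required by Theorem~\ref{thm:main1}. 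Invoking Theorem~\ref{thm:main1} successively with the linear choices $q=\xi_{1},\ldots,\xi_{n}$ yields, for each pair $(\beta,\ell)$, an integer $m_{\ell}^{(\beta)}\in\N$ and polynomials $h_{j}^{(\beta,\ell)}\in\C[\xi_{1},\ldots,\xi_{n}]$ with
\begin{align*}
\xi_{\ell}^{\,m_{\ell}^{(\beta)}}\, b_{i}^{(\beta)}(\xi) = \sum_{j} h_{j}^{(\beta,\ell)}(\xi)\, p_{ij}(\xi)\qquad\text{for all } i.
\end{align*}

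Setting $M:=\max_{\beta,\ell}m_{\ell}^{(\beta)}$ and $\tilde k:=n(M-1)+1$, a pigeonhole argument guarantees that every multi-index $\alpha$ of length $|\alpha|=\tilde k$ has at least one coordinate $\alpha_{\ell}\geq M$. Multiplying the displayed identity through by $\xi^{\alpha-m_{\ell}^{(\beta)}e_{\ell}}$ then produces polynomials $H_{j}^{(\alpha,\beta)}\in\C[\xi_{1},\ldots,\xi_{n}]$ satisfying $\xi^{\alpha}\, b_{i}^{(\beta)}(\xi) = \sum_{j} H_{j}^{(\alpha,\beta)}(\xi)\,p_{ij}(\xi)$ for all $i$. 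Reading this back as a relation between Fourier symbols and defining $\mathcal{B}$ via $\mathcal{B}[\xi]_{(\alpha,\beta),j}:=H_{j}^{(\alpha,\beta)}(\xi)$ gives the desired operator identity $\nabla^{\tilde k}\circ\A^{(2)} = \mathcal{B}\circ\A^{(1)}$ of~\ref{item:inc1}.

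With~\ref{item:inc1} in hand, part~\ref{item:inc2} is essentially immediate: if $\A^{(1)}u=0$, then $\nabla^{\tilde k}(\A^{(2)}u)=\mathcal{B}(\A^{(1)}u)=0$, so $u\in\ker(\nabla^{\tilde k}\circ\A^{(2)})$. Iterating Lemma~\ref{aux} along the sequence $\A^{(2)},\nabla\circ\A^{(2)},\ldots,\nabla^{\tilde k-1}\circ\A^{(2)}$ of homogeneous operators (whose orders grow by one at each step) yields $\ker(\nabla^{\tilde k}\circ\A^{(2)})=\ker(\A^{(2)})+\mathscr{P}_{k^{(2)}+\tilde k-1}(\R^{n};\R^{d})$. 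Choosing $V:=\mathscr{P}_{k^{(2)}+\tilde k-1}(\R^{n};\R^{d})$, which is automatically a finite dimensional vector space consisting of polynomials, delivers the inclusion of~\ref{item:inc2}; notably, no search for polynomials inside $\ker(\A^{(1)})$ itself is required. Part~\ref{item:inc3} then follows by pure symmetry: under equality of symbol kernels, apply~\ref{item:inc2} in both directions to obtain finite dimensional polynomial spaces $V_{1},V_{2}$ with $\ker(\A^{(1)})\subset\ker(\A^{(2)})+V_{1}$ and $\ker(\A^{(2)})\subset\ker(\A^{(1)})+V_{2}$; adding $V_{1}+V_{2}$ to both sides produces the equality of~\ref{item:inc3} with $V=W=V_{1}+V_{2}$.

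The technical heart of the argument is Theorem~\ref{thm:main1}, which turns the pointwise-in-$\xi$ symbol-kernel inclusion into a genuine polynomial identity in $\C[\xi_{1},\ldots,\xi_{n}]$; the main bookkeeping subtlety is that the exponents $m_{\ell}^{(\beta)}$ produced by Theorem~\ref{thm:main1} depend on $\beta$ and $\ell$, but the pigeonhole choice $\tilde k=n(M-1)+1$ handles this uniformly. Once this is sorted, Lemma~\ref{aux} cleanly identifies the correcting polynomial space as $\mathscr{P}_{k^{(2)}+\tilde k-1}$ and the remainder of the corollary is formal.
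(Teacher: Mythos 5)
Your proposal is correct and follows essentially the same route as the paper: apply Theorem~\ref{thm:main1} componentwise with $q=\xi_{\ell}$, use a pigeonhole choice of $\tilde k$ to factor every $\xi^{\alpha}$ with $|\alpha|=\tilde k$ through the resulting identities, then iterate Lemma~\ref{aux} and conclude~\ref{item:inc3} by symmetry. The only (cosmetic) differences are your slightly sharper exponent $\tilde k=n(M-1)+1$ in place of the paper's $n\max N(a,m)$ and your explicit identification of the correcting space as $\mathscr{P}_{k^{(2)}+\tilde k-1}(\R^{n};\R^{d})$.
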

	
	\begin{proof}
		Ad~\ref{item:inc1}. We aim to apply Theorem~\ref{thm:main1}, and we explain the setting first. Assuming that $\A^{(1)}$ is $\R^{l_{1}}$-valued and $\A^{(2)}$ is $\R^{l_{2}}$-valued, we may write for $v=(v_{1},...,v_{d})\in\mathbb{C}^{d}$
\begin{align*}
\A^{(1)}[\xi]v = \Big(\sum_{i=1}^{d}A_{ij}^{(1)}(\xi)v_{i} \Big)_{j=1,...,l_{1}}\;\;\;\text{and}\;\;\; \A^{(2)}[\xi]v = \Big(\mathbb{A}_{m}^{(2)}(\xi)v\Big)_{m=1,...,l_{2}}, 
\end{align*}
where every $\mathbb{A}_{m}^{(2)}(\xi)v$ can be written as 
\begin{align*}
\mathbb{A}_{m}^{(2)}(\xi)v=\sum_{i=1}^{d}v_{i}b_{im}(\xi).
\end{align*}
For each $m\in\{1,...,l_{2}\}$, we apply Theorem~\ref{thm:main1} to $p_{ij}[\xi]=A_{ij}^{(1)}[\xi]$ and $B[\xi]=\mathbb{A}_{m}^{(2)}(\xi)$; note that its applicability is ensured by~\eqref{eq:nullspaceinc1}. 

In consequence, for every component $\A^{(2)}_m$ with $m\in\{1,...,l_{2}\}$ and $a\in\{1,...,n\}$, we may find $N(a,m) \in \N$ and polynomials $h_{j,a}\in \C[\xi_1,...,\xi_n]$, such that \[
		\xi_a^{N(a,m)} \A^{(2)}_m(\xi) =  \sum_{j=1}^{l_1} h_{j,a}(\xi) \sum_{i=1}^d A^{(1)}_{ij}(\xi) v_i.
		\]
		Therefore, choosing $\widetilde{k} := n \max_{m\in\{1,...,l_2\},a\in\{1,...,n\}} N(a,m)$, we obtain that for every $\alpha \in \N^n$ with $\vert \alpha \vert= \tilde{k}$ and $m\in\{1,...,l_2\}$, there exists $h_{j\alpha }$ such that \[
		\xi^\alpha \A^{(2)}_m(\xi) = \sum_{j=1}^{l_1} h_{j \alpha}(\xi) \sum_{i=1}^d A^{(1)}_{ij}(\xi) v_i.
		\]
		Defining the differential operator $\mathcal{B}$ according to this Fourier symbol, \ref{item:inc1} follows, i.e.,
\begin{align*}
\mathcal{B}[\xi] _{m,\alpha}  (w) = \sum_{j=1}^{l_1} h_{j \alpha}(\xi)w_{j}, \quad m\in\{1,\ldots,l_2\}.
\end{align*}
Ad~\ref{item:inc2}. This directly follows from Lemma~ \ref{aux}. Indeed, applying Lemma~\ref{aux} $\widetilde{k}$-times, there exists a finite dimensional space $\widetilde{V}$ of polynomials such that \[
		\{ u \in \lebe^1_{\locc}\colon \nabla^{\widetilde{k}}\A^{(2)}u =0 \} = \{u \in \lebe^1_{\locc} \colon \A^{(2)} u =0 \} + \widetilde{V}.
		\]
		As $\ker \A^{(1)} \subset \ker \mathcal{B} \circ \A^{(1)} = \ker \nabla^{\tilde{k}} \circ \A^{(2)}$, the result directly follows. Finally, \ref{item:inc3} is immediate by applying \ref{item:inc2} in both directions. The proof is complete.
	\end{proof}
	We may now turn to the
	\begin{proof}[Proof of Theorem~\ref{thm:main}]
		Direction~\ref{item:main1}$\Rightarrow$ \ref{item:main2} of Theorem~\ref{thm:main} is just Corollary~\ref{coro:kernels}; using convolution one may first observe this for $\lebe^1_{\mathrm{loc}}$ functions and then generalize it to $\mathscr{D}'$. On the other hand, direction \ref{item:main2}$\Rightarrow$\ref{item:main1} follows from a routine construction (see e.g. \cite{Smith,FoMu,GR19}) which we outline for the reader's convenience. Suppose towards a contradiction that there exists $\xi\in\mathbb{C}^{n}\setminus\{0\}$ such that $\ker(\B[\xi])\neq\ker(\widetilde{\B}[\xi])$. Without loss of generality, we may then assume there exists $v\in \C^{l}\setminus\{0\}$ such that $v\in\ker(\B[\xi])\setminus\ker(\widetilde{\B}[\xi])$. The proof is then concluded by considering the plane waves $u_h(x):=e^{\mathrm{i}x\cdot h\xi}v$ for $ h \in \mathbb{Z}$ and sorting by real and imaginary parts; passing to the span of $u_{h}$, $h\in\mathbb{Z}$, we obtain an infinite dimensional vector space which, up to the zero function, belongs to $\ker(\mathbb{B})\setminus\ker(\widetilde{\mathbb{B}})$.
	\end{proof}  
	\begin{example} \label{Bilaplace} In general, Theorem \ref{thm:main} will fail if $\B$ and $\widetilde{\B}$ \textit{do not} satisfy the complex constant rank property. As one readily verifies, if we take $\B = \Delta$ and $\widetilde{\B} = \Delta^2$ to be the Laplacian and the Bi-Laplacian (and so both violate the constant rank condition over $\mathbb{C}$ by Example~\ref{ex:laplace}) in $n=2$ dimensions,  \[
		\ker_{\mathbb{C}}(\B[\xi]) = \ker_{\mathbb{C}}(\widetilde{\B}[\xi]) = \left\{\begin{array}{ll} \mathbb{C} & \text{if}\; \xi = \lambda (1 ,\imag)^{\top} \text{ or } \xi = \lambda (1,-\imag)^{\top}, ~\lambda \in \mathbb{C} \\	\{0\} &\text{otherwise.} \end{array} \right.
		\]
Denote $\ker(\Delta)$ and $\ker(\Delta^{2})$ the nullspaces of $\Delta$ or $\Delta^{2}$, respectively, in $\mathscr{D}'(\R^{n})$. Denoting the homogeneous harmonic polynomials on $\R^{n}$ by $\mathscr{P}_{\mathrm{ho}}(\R^{n})$, we have 
\begin{align*}
\ker(\Delta)+\widetilde{\mathscr{P}}\subset\ker(\Delta^{2}), 
\end{align*}
where $\widetilde{\mathscr{P}}=\{v\colon\;\Delta v = p\;\text{for some}\;p\in\mathscr{P}_{\mathrm{ho}}(\R^{n})\}$, and from here one sees that the nullspaces of $\mathbb{B}$ and $\widetilde{\mathbb{B}}$ differ by an infinite dimensional vector space. 		
	\end{example}
	\begin{remark} \label{rem:inhom}
Up to now, we assumed that the polynomials $p_{ij}$ are homogeneous polynomials of order $k$. This assumption is motivated by the fact that we deal with homogeneous differential operators. However, we can also define the \textit{constant rank property} when not all polynomials have the same order. In particular, for polynomials $p_{ij}$ as in \eqref{def:system} we may weaken the assumption to $p_{ij}$ having order $k_j \in \N$, and the statement of the vectorial Nullstellensatz still holds true.
	
For the corresponding differential operator, this includes the following setting. The operator $\B= (\B_0,...,\B_k)$ is componentwisely defined via homogeneous differential operators $\B_i \colon \hold^{\infty}(\R^n;V) \to \hold^{\infty}(\R^n;W_i)$ of order $i$ (for $i=0$ the operator $\mathbb{B}_0$ is similarly understood to be a linear map). In particular, $\B \colon \hold^{\infty}(\R^n;V) \to \hold^{\infty}(\R^n;W_0 \times \dots \times W_k)$. The constant rank property in this setting means that there exists $r\in\mathbb{N}$ such that \[
\bigcap_{i=0}^k \ker(\B_i [\xi]) = r, \quad \text{for all}\;\xi \in \C^n \setminus \{0\}.
\]
Observe that it is not required at all, that each homogeneous component satisfies the constant rank property itself, e.g. $\B u=(\partial_1 u, \partial_2^2 u)$. 

In view of Lemma \ref{aux} we can however also transform this setting into a fully homogeneous one, while only allowing an additional finite-dimensional nullspace. Indeed, the operator $\widetilde{\mathbb{B}}$ given by \[
\widetilde{\B} = (\nabla^k \circ \B_0, \nabla^{k-1} \circ \B_1, \dots, \B_k)
\]
is homogeneous of order $k$ and its nullspace only differs by a finite dimensional space from the nullspace of $\B$.
	\end{remark}
	
	\begin{remark}\label{rem:leadover}
		For now, we have seen that if $\B[\xi]$ and $\widetilde{\B}[\xi]$ have the same nullspace for all $\xi\in\C^{n}\setminus\{0\}$, then their nullspaces as differential operators only differ by finite dimensional spaces. Given the nullspaces $V(\xi)= \ker (\B[\xi])$ for some differential operator $\B$, it is thus natural to ask for a \emph{minimal} differential operator in the sense of nullspaces, i.e., such that if $\ker(\B_0[\xi]) = V(\xi)$ and $\ker(\widetilde{\B}[\xi]) = V(\xi)$ for each $\xi\neq 0$, then $\ker(\B_0) \subset \ker(\widetilde{\B})$. 

To this end, let us recall some algebraic facts about ideals. Let $w_1,...,w_d$ be a basis of $W$. For a constant coefficient differential operator $\A$ with complex Fourier symbol $\A[\xi]$ we define the set of annihilator polynomials $\mathcal{B}$  as all vector valued polynomials vanishing on $\A[\xi]$, i.e. \begin{align*}
		\mathcal{B} = \{ P(\xi_1,...,\xi_n) = \sum_{i=1}^d p_i(\xi) w_i \colon P(\xi_1,...,\xi_n) \circ \A[\xi] =0 \}
		\end{align*}
		This $\mathcal{B}$ generates an ideal $\tilde{\mathcal{B}}$ in $\C[\xi_1,...,\xi_n,w_1,...,w_d]$. As every ideal in the ring of polynomials is finitely generated, so is $\widetilde{\mathcal{B}}$. In particular, there exists a finite generator $\mathcal{B}_0$ consisting of polynomials in $\mathcal{B}$; these are linear in $w_1,...,w_d$. As a consequence, every $P \in \mathcal{B}$ can be written as
		\begin{align} \label{eq:product}
		P(\xi) = \sum_{ P_j \in \mathcal{B}_0} \alpha_j(\xi) P_j
		\end{align}
		for some polynomials $\alpha_j$. In particular, this set $\mathcal{B}_0$ can be identified with a differential operator $\B_0$, which is component-wise homogeneous (where we view differential operators of degree zero as homogeneous of degree zero). Due to \eqref{eq:product} every differential operator $\B$ which is an annihilator of $\A$ can be written as \[
		\B = \mathbb{B}' \circ \B_0,
		\]
		hence $\ker(\B_0) \subset \ker(\B)$. Thus we might consider $\B_0$ as the \emph{natural} annihilator of $\A$.
	\end{remark}
	\section{A Poincar\'e-type lemma in $n=2$ dimensions}\label{sec:application}
In this concluding section we give a sample application of the results provided so far by proving a Poincar\'e lemma in two dimensions. For simplicity, we focus on first order operators and functions defined on a cube $Q=(0,1)^n$. For $\B$-free functions on the torus $\mathbb{T}_{n}$, it is well-known that $\A$, if $\A$ is a potential in the algebraic sense, it is also a potential in the sense that \[
	u \in \lebe^2(\mathbb{T}_n;W),\; \B u =0,\; (u)_{\mathbb{T}_{n}} =0 \quad \Longrightarrow \quad u=\A v \text{ for some } v \in \sobo^{1,2}(\mathbb{T}_{n};V).
	\]
	This is shown by use of Fourier methods. We cannot apply such a technique directly for functions on the cubes, as here boundary values cannot assumed to be periodic. Our strategy thus is to add a measure $\mu$ supported on $\partial Q$ such that for a function $u$ satisfying $\B u =0$ in $\mathrm{H}^{-1}(Q;W)$, the measure $u + \mu$ satisfies $\B(u+\mu)=0$ in $\mathrm{H}^{-2}(\mathbb{T}_n;W)$. We then can apply the theory on the torus to get some $v \in \lebe^2(\mathbb{T}_n;V)$ with $\A v= (u+\mu)$, i.e. $\A v = u$ in $Q$. In dimension $n=2$, we show that this strategy works for any differential operator of constant rank in $\C$ by adding measures on the one-dimensional faces of $Q$. In higher dimensions, there might be further restrictions on the operators, but e.g. for $\A=\curl$, $\B=\di$ one may show such a result by adding measures on one- and two-dimensional  faces.  
	
For the remainder of this section let $\A$ and $\B$ differential operators of first order given by
	\begin{align*}
	\A u = \sum_{k=1}^n A_k \partial_k u, \quad \B u= \sum_{k=1}^n B_k \partial_k u,
	\end{align*}
	where $A_k \in \Lin(\R^m;\R^d)$, $B_k \in \Lin(\R^d;\R^l)$. Let $Q=(0,1)^n$ and define \begin{align*}
	\lbq = \{ u \in \lebe^2(Q;\R^d) \colon \B u =0 \text{ in } \H^{-1}(Q;\R^{l})\}
	\end{align*} 
	and likewise \begin{align*}
	\H^1_{\B} (Q) = \{ u \in \H^1(Q;\R^d) \colon \B u =0 \;\text{in}\;\lebe^{2}(Q;\R^{l})\}, 
	\end{align*}
both being equipped with the usual norms on these spaces. For the following, we tacitly assume that $\mathbb{B}$ is an annihilator of $\mathbb{A}$ and that $\mathbb{B}$ has constant rank over $\mathbb{C}$. Our objective of this section is to establish the following result:
	\begin{theorem}\label{thm:firstorder}
		Let $n=2$. Then there exists a finite dimensional space $X \subset \hilb^1_{\B}(Q)$ consisting of polynomials and a linear, bounded map $\A^{-1}\colon \hilb^1_{\B}(Q) \cap \ker(\B) \to \lebe^2(Q;\R^m)$, such that $\A \circ \A^{-1} u - u \in X$. If, in addition, the operator $\B$ is spanning (cf.~Lemma~\ref{lem:Aux1} for this terminology), then $X =\{0\}$.
	\end{theorem}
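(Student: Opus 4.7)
The plan is to implement the three-step strategy sketched just before the theorem: periodically extend $u$ to the torus $\mathbb{T}^2$, introduce a boundary-supported correction measure $\mu$ so that $\tilde u + \mu$ is $\B$-free on $\mathbb{T}^2$, and then invert $\A$ by Fourier multipliers. Fix $u \in \hilb^1_{\B}(Q)$ with $\B u = 0$ and let $\tilde u$ be the $\Z^2$-periodic extension of $u$ restricted to $\mathbb{T}^2\cong \R^2/\Z^2$. In general $\tilde u$ jumps across the two closed curves $\Gamma_k := \{x_k\in\Z\}/\Z^2$ for $k=1,2$, and integration by parts on $\mathbb{T}^2$ gives
\[
\B \tilde u \;=\; \sum_{k=1}^{2} B_k[u]_k \otimes \mathcal{H}^{1}\mres \Gamma_k \;\in\; \hilb^{-1}(\mathbb{T}^2;\R^l),
\]
where $[u]_k\in \lebe^2(\Gamma_k;\R^d)$ denotes the relevant trace jump of $u$ across $\Gamma_k$.

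Next I would construct a distribution $\mu = \mu_1 + \mu_2 \in \hilb^{-1}(\mathbb{T}^2;\R^d)$, with $\mu_k$ supported on $\Gamma_k$, such that $\B(\tilde u + \mu) = 0$ in $\hilb^{-2}(\mathbb{T}^2;\R^l)$. The ansatz $\mu_k = \alpha_k\otimes \mathcal{H}^{1}\mres\Gamma_k$ with $\alpha_k \in \lebe^2(\Gamma_k;\R^d)$, followed by a Fourier expansion along the one-dimensional curve $\Gamma_k$, decouples the condition into a countable family of finite-dimensional linear equations indexed by a single integer frequency $m\in\Z$. The constant rank of $\B$ over $\C$ gives uniform solvability of these equations on every non-trivial mode, with $\lebe^2$-bounds on $\alpha_k$ controlled by the $\lebe^2$-norm of $[u]_k$. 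This step is the crucial use of $n=2$: each boundary face of $Q$ is one-dimensional, so its Fourier dual involves a single integer variable and there is no compatibility needed between measures living on faces of different dimension.

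With $\B(\tilde u + \mu)=0$ on $\mathbb{T}^2$ at hand, I would then apply Fourier inversion. Since $\B$ annihilates $\A$ and has constant rank over $\C$, Theorem~\ref{thm:main} combined with Raita's construction of a constant-rank potential yields the exactness $\A[\xi](\C^m) = \ker(\B[\xi])$ for every $\xi\in\C^2\setminus\{0\}$. Constant rank then furnishes a pseudo-inverse $\A[\imag\xi]^{\dagger}\colon\ker(\B[\imag\xi])\to \C^m$ which, being $0$-homogeneous in $\xi$, is uniformly bounded on $\Z^2\setminus\{0\}$. Defining $v\in\lebe^2(\mathbb{T}^2;\R^m)$ via $\widehat v(\xi):=\A[\imag\xi]^{\dagger}\widehat{(\tilde u+\mu)}(\xi)$ for $\xi\neq 0$ and $\widehat v(0)=0$, one checks $\A v = \tilde u + \mu$ on $\mathbb{T}^2$, hence $\A v = u$ on $Q$ because $\mu$ is supported on $\partial Q$. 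Setting $\A^{-1}u := v|_Q$ yields a linear bounded operator with $\A\A^{-1}u - u$ equal to the contribution of the zero Fourier mode together with the finite-dimensional cokernel on the torus, giving a polynomial residue lying in a finite-dimensional subspace $X\subset\hilb^1_{\B}(Q)$.

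The main obstacle is Step 2, namely the construction of the boundary measure $\mu$: one must verify that the jump data $B_k[u]_k$ actually lie in the range of the appropriate Fourier-restricted symbol on each circle $\Gamma_k$, with controlled solution. This is precisely where the constant rank over $\C$ enters, since it provides uniform invertibility of the relevant finite-dimensional operators on all non-zero modes. The only genuine obstruction sits at the zero Fourier mode, and this becomes the polynomial space $X$ described in the statement. Under the spanning hypothesis of Lemma~\ref{lem:Aux1}, the zero-mode compatibility is automatic, which forces $X=\{0\}$ and yields the sharper conclusion.
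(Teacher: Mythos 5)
Your high-level strategy matches the paper's: periodify to $\mathbb{T}_2$, add a measure on the one-dimensional faces to restore $\B$-freeness, and then invert $\A$ via the torus potential theory of Raita. However, there is a genuine gap in Step~2, and it is precisely the place where the paper does extra work. When you try to solve $\B(\tilde u+\mu)=0$ with $\mu_k=\alpha_k\otimes\mathcal{H}^1\mres\Gamma_k$, the dipole terms force $\alpha_1(y)\in\ker(\B[e_1])$ pointwise and the tangential derivative gives the first-order ODE $\B[e_2]\alpha_1'(y)=-\B[e_1]\bigl(u(0,y)-u(1,y)\bigr)=:-w_1(y)$ on the circle $\Gamma_1$. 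For a periodic $\alpha_1\in\lebe^2$ to exist you must have $\int_0^1 w_1\,\mathrm{d}y=:c_1=0$; integration by parts only yields $c_1=-c_2$, not $c_1=0$. This failure is not confined to the zero Fourier mode of the torus problem, and it is not cured by the spanning hypothesis (e.g.\ $\B=\mathrm{div}$, $u=(x_1,-x_2)$ on $Q=(0,1)^2$ gives $c_1=-1$). Consequently your asserted family of boundary measures need not exist, and the claim that ``under spanning the zero-mode compatibility is automatic, forcing $X=\{0\}$'' is false.

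The paper resolves exactly this obstruction by first subtracting a quadratic polynomial corrector $S_1u\in\ker(\B)$ (Lemma~\ref{lem:addmeas}), chosen so that $u+S_1u$ has vanishing boundary constants $c_1=c_2=0$; only then is the boundary measure $S_2u$ built by line integrals along the two faces (the analogue of your Fourier-side ODE inversion, via the map $L_{\xi_1,\xi_2}$ of Lemma~\ref{lem:Aux2}). Crucially, $S_1u$ is then shown in Proposition~\ref{prop:OliverKahn} to be $\A$-exact, $-S_1u=\A(P_3+P_4)$, so it is absorbed into $\A^{-1}$ and does \emph{not} contribute to the residual space $X$. Your proposal also misattributes the source of $X$: in the paper $X$ comes solely from the Nullstellensatz-based reduction of $\B$ to the spanning operator $\widetilde\B=(\B,P_{V_0})$, which shifts $\ker(\B)$ by a finite-dimensional polynomial space; it does not come from a zero-mode cokernel of the torus inversion. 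You would need to add both the polynomial corrector step (and verify its $\A$-exactness) and the reduction to $V_0=\{0\}$ to make the argument go through.
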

In consequence, in the situation of Theorem~\ref{thm:firstorder} we may write $u=\A(\A^{-1}u)+\pi$ for some polynomial $\pi\in X$. We split the proof of Theorem~\ref{thm:firstorder} into several steps.
	\begin{lemma}
		Let $n=2$. We can decompose 
		\begin{align}\label{eq:DecompMain}
		\R^d= V_0 + V_1 + V_2,
		\end{align}
		such that $V_i \cap V_j =\{0\}$, $V_i \perp V_j$ for $i,j\in\{0,1,2\}$ with $i\neq j$ and \begin{align*}
		V_0&= \left(\spano_{\xi \in \R^2\setminus \{0\} } \ker (\B[\xi]) \right)^{\perp} = \left( \spano ( \ker (\B[e_1] )\cup (\ker \B[e_2])) \right)^{\bot},  \\
		V_2&= \bigcap_{\xi \in \R^2 \setminus \{0\}} \ker (\B[\xi]) = \ker(\B[e_1]) \cap \ker(\B[e_2]).
		\end{align*}
	\end{lemma}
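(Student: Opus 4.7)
The plan is to reduce the assertion to elementary linear algebra between the matrices $B_1 := \B[e_1]$, $B_2 := \B[e_2]$ and their potential counterparts $A_1 := \A[e_1]$, $A_2 := \A[e_2]$, exploiting that both $\A$ and $\B$ are of first order: $\B[\xi] = \xi_1 B_1 + \xi_2 B_2$ and $\A[\xi] = \xi_1 A_1 + \xi_2 A_2$. The identity defining $V_2$ is then immediate --- by linearity in $\xi$, a vector killed by both $B_1$ and $B_2$ lies in $\ker(\B[\xi])$ for every $\xi$, while the reverse inclusion is trivial.

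The main point is the identity for $V_0$, which after taking orthogonal complements amounts to showing
\begin{equation*}
\operatorname{span}_{\xi \in \R^2 \setminus \{0\}} \ker(\B[\xi]) \;=\; \ker(B_1) + \ker(B_2).
\end{equation*}
Here I would invoke the standing assumption that $\B$ is an annihilator of $\A$, so that $\ker(\B[\xi]) = \A[\xi](\R^m) = (\xi_1 A_1 + \xi_2 A_2)(\R^m)$ for every $\xi \ne 0$. As $\xi$ and $v \in \R^m$ vary, the vectors $\xi_1 A_1 v + \xi_2 A_2 v$ span precisely $A_1(\R^m) + A_2(\R^m)$; specialising $\xi$ to $e_1$ and $e_2$ identifies $A_j(\R^m)$ with $\ker(\B[e_j]) = \ker(B_j)$, giving the displayed equality. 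This is the step that genuinely uses the first-order assumption on $\A$, and I expect it to be the main obstacle to watch for: were higher-order potentials allowed, $\ker(\B[\xi])$ could depend polynomially of degree $\geq 2$ on $\xi$ and the left-hand span would strictly exceed $\ker(B_1) + \ker(B_2)$, so the $V_0$-identity would fail.

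The orthogonal decomposition is then a formal consequence: $V_2 \subseteq \ker(B_1) + \ker(B_2) = V_0^\perp$, so $V_0 \perp V_2$ and $V_0 \cap V_2 = \{0\}$; setting $V_1 := V_0^\perp \cap V_2^\perp$ yields the orthogonal splitting $\R^d = V_0 \oplus V_1 \oplus V_2$ with the asserted pairwise orthogonality and trivial intersections.
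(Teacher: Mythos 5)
Your proof is correct and follows essentially the same route as the paper: both arguments use linearity in $\xi$ for the $V_2$ identity, and both use the annihilator relation $\ker(\B[\xi]) = \Image(\A[\xi])$ together with the first-order structure of $\A$ (so that $\Image(\A[\xi]) \subset \Image(\A[e_1]) + \Image(\A[e_2])$) to reduce the span over all $\xi$ to $\ker(\B[e_1]) + \ker(\B[e_2])$, and hence to get the $V_0$ identity upon taking orthogonal complements. The closing observation that this step is exactly where the first-order hypothesis enters is a nice remark but is not needed for the argument.
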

	\begin{proof}
		Clearly, $V_0 \perp V_2$, so $V_1$ may be just chosen accordingly. It remains to show that $V_0$ and $V_2$ can be represented in terms of the behaviour of $\B[e_1]$ and $\B[e_2]$. As $\B$ is of order one, then $v \in \ker \B[e_1] \cap \ker \B[e_2]$ implies by linearity that $v \in \ker \B[\lambda e_1 + \mu e_2]$ for all $\lambda,\mu\in\R$, showing the characterisation of $V_2$. On the other hand, if $\A$ is of order one, then for all $\xi =\xi_1 e_1+ \xi_2 e_2 \in \R^2$ \begin{align*}
		\Image (\A[\xi_1 e_1 + \xi_2 e_2]) \subset \Image (\A[e_1]) + \Image(\A [e_2]).
		\end{align*}
		As $\Image(\A[\xi]) = \ker(\B[\xi])$, we get the desired result for $V_0$.
	\end{proof}
	For the following, observe that we may define another  differential operator \[\widetilde{\B} \colon \hold^{\infty}(\R^2;\R^d) \to \hold^{\infty}(\R^2;\R^l \times V_{0})\] by defining $\widetilde{\B}(u) = (\B u , P_{V_0}(u))$, where $P_{V_0}$ denotes the orthogonal projection onto $V_0$. Then $\ker(\widetilde{\B}[\xi]) = \ker (\B [\xi])$ for all $\xi \in \mathbb{C}^2 \setminus \{0\}$. In view of Remark~\ref{rem:inhom}, we have $\ker(\B) = \ker(\widetilde{\mathbb{B}}) + X$ for some finite dimensional subspace $X \subset \lbq$.  Note that $\widetilde{\B}$ is not homogeneous in total but in its single components; this will suffice for the following. 
	As a consequence, we may assume from now on that $V_0 =0$ by considering $\widetilde{\B}$ instead of $\B$. This is why we have the finite dimensional space $X$ in the formulation of Theorem \ref{thm:firstorder}.

	\begin{lemma}\label{lem:Aux1}
		Suppose that $\B$ is \emph{spanning}, i.e., $V_0=\{0\}$ in \eqref{eq:DecompMain} and that the union $\bigcup_{\xi \in \R^2} \Image(\B[\xi])$ spans $\R^l$. Then we have \[
		\R^l = \spano_{\xi \in \R^2 \setminus \{0\}} \Image (\B[\xi]) = \Image(\B[e_1]) = \Image(\B[e_2]) =\Image(\B[\xi])
		\]
for all $\xi\in\R^{2}\setminus\{0\}$. 
	\end{lemma}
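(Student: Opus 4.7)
The proof is a direct dimension count based on the two spanning hypotheses and the first-order form of $\B$; no algebraic geometry beyond what the previous lemma already uses is required. Write $K_i := \ker \B[e_i] = \ker B_i$ and $W_i := \Image(\B[e_i]) = B_i(\R^d)$ for $i \in \{1,2\}$, and let $r := \dim_{\R} \Image(\B[\xi])$, which is independent of $\xi \in \R^2 \setminus \{0\}$ by the constant-rank-over-$\C$ hypothesis (the matrices $B_i$ being real, the complex and real ranks coincide at real $\xi$).

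Just as in the proof of the previous lemma, the first-order form of $\B$ gives $\Image(\B[\xi]) \subseteq W_1 + W_2$ for every $\xi \in \R^2 \setminus \{0\}$, and the spanning-of-images hypothesis then forces $W_1 + W_2 = \R^l$. The condition $V_0 = \{0\}$ unravels (again via the previous lemma) to $K_1 + K_2 = \R^d$. Now fix $\xi = (\xi_1,\xi_2) \in \R^2$ with $\xi_1 \xi_2 \neq 0$ and decompose $\R^d = K_1 + K_2$ before applying $\B[\xi]$: on $K_1$ the symbol reduces to $\xi_2 B_2$ and on $K_2$ to $\xi_1 B_1$, so
\[
\Image(\B[\xi]) = \xi_2 \, B_2(K_1) + \xi_1 \, B_1(K_2).
\]
For any $w = B_2 u \in W_2$, writing $u = u_1 + u_2 \in K_1 + K_2$ and using $B_2 u_2 = 0$ gives $w = B_2 u_1 \in B_2(K_1)$, whence $B_2(K_1) = W_2$; symmetrically $B_1(K_2) = W_1$. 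Since $\xi_1,\xi_2 \neq 0$ and the $W_i$ are linear subspaces, it follows that $\Image(\B[\xi]) = W_1 + W_2 = \R^l$, and in particular $r = l$.

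Knowing $r = l$, the inclusion $W_i \subseteq \R^l$ together with $\dim W_i = l$ forces $W_1 = W_2 = \R^l$, which takes care of the axes $\xi = \pm e_1, \pm e_2$; for every other $\xi \in \R^2 \setminus \{0\}$ the constant-rank identity $\dim \Image(\B[\xi]) = l$ already forces $\Image(\B[\xi]) = \R^l$. There is no serious obstacle in this argument; the only point requiring care is that the axes must be handled via the a posteriori dimension count at the end, rather than by directly invoking the $K_1 + K_2$ decomposition when one of $\xi_1,\xi_2$ vanishes (for then the factors $\xi_i$ multiplying $B_i(K_j)$ would collapse).
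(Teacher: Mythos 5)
Your proof is correct, and it takes a genuinely different route from the paper. The paper argues by contradiction: assuming $\Image(\B[\xi_1]) \neq \Image(\B[\xi_2])$ for some linearly independent $\xi_1,\xi_2$, it passes to the adjoint symbol $\B^{\ast}$, produces a fixed nonzero vector $v \in \Image(\B^{\ast}[\xi_1+\lambda\xi_2])$ for every $\lambda$, and then invokes the continuity and zero-homogeneity of the projection $\xi \mapsto P_{\ker(\B[\xi])}$ (Fonseca--M\"uller, Prop.~2.7, which requires the constant rank property) to conclude $v \perp \ker(\B[\xi])$ for all $\xi$, contradicting $V_0 = \{0\}$. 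Your argument is instead a direct dimension count based on the decomposition $\R^d = K_1 + K_2$ that $V_0 = \{0\}$ provides, together with the elementary observation $B_2(K_1) = W_2$, $B_1(K_2) = W_1$: this yields $\Image(\B[\xi]) = W_1 + W_2$ for off-axis $\xi$, and the constant real rank (which follows from constant complex rank at real $\xi$, as you note) propagates the conclusion to the axes. Both proofs ultimately hinge on the constant rank property, but yours replaces the analytic continuity-of-projections input by pure linear algebra, which is arguably more elementary and in the spirit the paper itself advertises. One minor structural remark: you could first run the same dimension count \emph{without} the image-spanning hypothesis to get $\Image(\B[\xi]) = W_1 + W_2$ for all $\xi$ (for off-axis $\xi$ directly, for the axes because $\dim W_i = r = \dim(W_1+W_2)$ forces $W_1 = W_2 = W_1+W_2$), matching the paper's intermediate conclusion that all images are equal, and only then invoke spanning to identify this common image with $\R^l$; this would make the two hypotheses play cleanly separated roles. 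Also, your closing caveat about treating the axes ``a posteriori'' is unnecessary once $r=l$ is known: the dimension count then handles all $\xi$ uniformly.
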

Let us shortly remark that for the kernel of the differential operator $\B$, we might restrict our study to operators, such that  \[
	\R^l = \spano_{\xi \in \R^2 \setminus \{0\}} \Image (\B[\xi])
	\]
	If this is not satisfied, we might define the vector space $Y$ as above span and consider $\B'=P_Y \circ \B$, where $P_Y$ is the orthogonal projection onto $Y$. Then $\ker \B'=\ker \B$ and $\B'$ satisfies \[
	\spano_{\xi \in \R^2 \setminus \{0\}} \Image (\B'[\xi]) =Y,
	\]
	i.e. satisfies the assertions of Lemma \ref{lem:Aux1}.
	\begin{proof}[Proof of Lemma \ref{lem:Aux1}]
		Suppose there exist $\xi_1,\xi_2 \in \R^2\setminus\{0\}$ such that $\Image(\B[\xi_1]) \neq \Image(\B[\xi_2])$. In particular, $\xi_1$ and $\xi_2$ are linearly independent. Moreover, $\ker(\B^{\ast}[\xi_1]) \neq \ker(\B^{\ast}[\xi_2])$ and so there exists some $w \in \R^l$ such that $w \in \ker \B^{\ast}[\xi_2]$ but $w \notin \ker \B^{\ast}[\xi_1]$. 
		Therefore $0\neq v:= \B^{\ast}[\xi_1 + \lambda \xi_2]w \in \Image(\B^{\ast}[\xi_1 + \lambda \xi_2])$ for any $\lambda \in\R$. As $\Image(\B^{\ast}[\xi]) = \left( \ker \B[\xi] \right)^{\perp}$, $P_{\ker(\B[\xi_{1}+\lambda\xi_{2}])}(v) =0$, where again $P_V$ denotes the orthogonal projection onto the subspace $V \subset \R^d$. The map 
		\begin{align}\label{eq:muellermappo}
		\xi \mapsto P_{\ker(\B[\xi])}(\cdot)
		\end{align}
		is homogeneous of degree zero and continuous for $\B$ satisfying the constant rank property \cite[Prop.~2.7]{FoMu}. Every $\xi\in\R^{2}\setminus\R\xi_{2}$ can be written as $\xi=\mu(\xi_{1}+\lambda\xi_{2})$ for suitable $\lambda\in\R$ and $\mu\in\R\setminus\{0\}$. For such $\xi$, the zero homogeneity of~\eqref{eq:muellermappo} yields $P_{\ker(\B[\xi])}(v)=0$. On the other hand, choosing $\mu=\lambda^{-1}$ and letting $\lambda \to \infty$, the continuity of~\eqref{eq:muellermappo} we conclude that $P_{\ker(\B[\xi])}(v)=0$ for any $\xi\in\R^{2}\setminus\{0\}$. Combining this with the zero homogeneity of~\eqref{eq:muellermappo}, we also obtain $v\in (\ker(\mathbb{B}[\theta\xi_{2}]))^{\bot}$ for all $\theta\in\R\setminus\{0\}$. Hence, $v\in(\ker(\mathbb{B}[\xi]))^{\bot}$ for all $\xi\in\R^{2}\setminus\{0\}$, and this contradicts our assumption $V_{0}=\{0\}$. The proof is complete.
	\end{proof}
	\begin{lemma}\label{lem:Aux2}
		Let $\xi_1,\xi_2 \in \R^2$ be linearly  independent and $\B$ be spanning in the sense of Lemma~\ref{lem:Aux1}. Then there is a linear map $L_{\xi_1,\xi_2} \colon \R^l \to \ker (\B[\xi_1])$ with \begin{align*}
		\B[\xi_2] \circ L_{\xi_1,\xi_2} = \id_{\R^{l}}.
		\end{align*}
	\end{lemma}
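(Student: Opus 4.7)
The plan is to show that the restriction $T := \B[\xi_{2}]|_{\ker \B[\xi_{1}]} \colon \ker \B[\xi_{1}] \to \R^{l}$ is surjective; then any linear right inverse of $T$ will serve as $L_{\xi_{1},\xi_{2}}$. Existence of such a right inverse is a standard linear algebra fact: pick a linear complement of $\ker T$ inside $\ker \B[\xi_{1}]$ and invert $T$ on it.

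To prepare, note that the spanning hypothesis combined with Lemma~\ref{lem:Aux1} gives $\Image \B[\xi] = \R^{l}$ for every $\xi \in \R^{2} \setminus \{0\}$; in particular, for any prescribed $w \in \R^{l}$ we may pick some $u \in \R^{d}$ with $\B[\xi_{2}] u = w$.

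The heart of the argument will be the decomposition
\[ \ker \B[\xi_{1}] + \ker \B[\xi_{2}] = \R^{d}. \]
Since $\{\xi_{1},\xi_{2}\}$ is a basis of $\R^{2}$ and the potential $\A$ is first order, every $\xi \in \R^{2}$ may be written as $\xi = \alpha_{1}\xi_{1} + \alpha_{2}\xi_{2}$, and thus $\A[\xi] = \alpha_{1}\A[\xi_{1}] + \alpha_{2}\A[\xi_{2}]$. Using the annihilator relation $\ker \B[\xi] = \Image \A[\xi]$, this yields $\ker \B[\xi] \subset \ker \B[\xi_{1}] + \ker \B[\xi_{2}]$ for every $\xi \in \R^{2} \setminus \{0\}$. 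Summing over $\xi$ and invoking the spanning hypothesis $V_{0}=\{0\}$, which by definition means $\spano_{\xi \neq 0} \ker \B[\xi] = \R^{d}$, the displayed decomposition follows.

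To conclude, given $w \in \R^{l}$ I would pick $u$ with $\B[\xi_{2}] u = w$ as above and decompose $u = v_{1} + v_{2}$ with $v_{i} \in \ker \B[\xi_{i}]$. Then $v_{1} \in \ker \B[\xi_{1}]$ and $\B[\xi_{2}] v_{1} = \B[\xi_{2}] u - \B[\xi_{2}] v_{2} = w$, so $T$ is surjective and the construction of $L_{\xi_{1},\xi_{2}}$ concludes the proof. The main obstacle is establishing the decomposition of $\R^{d}$: it essentially exploits the combination of the first-order nature of both $\A$ and $\B$, the annihilator relation $\ker \B[\xi] = \Image \A[\xi]$, and the spanning assumption $V_{0}=\{0\}$; once this is in place, the remaining steps are routine.
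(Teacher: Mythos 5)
Your proposal is correct. It shares the paper's skeleton -- reduce the lemma to surjectivity of the restriction $\B[\xi_2]|_{\ker(\B[\xi_1])}$, then take any linear right inverse -- and both arguments ultimately rest on the same two facts: $\Image(\B[\xi_2])=\R^{l}$ (Lemma~\ref{lem:Aux1}) and the decomposition $\ker(\B[\xi_1])+\ker(\B[\xi_2])=\R^{d}$, which you justify exactly as the paper does in its preceding decomposition lemma, via $\A[\alpha_1\xi_1+\alpha_2\xi_2]=\alpha_1\A[\xi_1]+\alpha_2\A[\xi_2]$ and $\Image(\A[\xi])=\ker(\B[\xi])$. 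Where you diverge is the final step: the paper deduces surjectivity of the restriction by dimension counting (setting $s=\dim\ker(\B[\xi])$, $r=\dim V_2$, using the constant rank property, inclusion--exclusion to get $d=2s-r$, rank--nullity to get $l=s-r$, and identifying the nullspace of the restriction with $V_2$), whereas you argue element-wise: given $w$, pick $u$ with $\B[\xi_2]u=w$, split $u=v_1+v_2$ with $v_i\in\ker(\B[\xi_i])$, and observe $\B[\xi_2]v_1=w$. Your version is slightly more elementary -- it bypasses the rank--nullity bookkeeping and does not explicitly invoke the constancy of $\dim\ker(\B[\xi])$ beyond what Lemma~\ref{lem:Aux1} already encodes -- while the paper's computation has the minor side benefit of pinning down the dimensions involved ($l=s-r$). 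Both are complete proofs.
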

	\begin{proof}
	For two finite dimensional real vector spaces $X_{1},X_{2}$, we first recall that a linear map $T\colon X_{1}\to X_{2}$ has a right inverse $S\colon X_{2}\to X_{1}$ if and only if $T$ is surjective. In view of the lemma, we thus have to establish that $\B[\xi_2]|_{\ker(\B[\xi_{1}])}$ is surjective, and this follows from a dimensional argument as follows: Let $r= \dim(V_2 )$ and $ s= \dim(\ker(\B[\xi]))$, which does not depend on $\xi \in \R^2 \setminus \{0\}$ due to the constant rank property. As $\B$ is spanning, 
		\[ d = \dim (\ker(\B[\xi_1])) + \dim (\ker (\B[\xi_2])) - \dim(\ker (\B[\xi_1]) \cap \ker (\B[\xi_2])) = 2s -r.
		\]
By Lemma~\ref{lem:Aux1}, $\R^l= \Image(\B[\xi_1])$, and thus the rank-nullity theorem yields \[
		l = \dim(\Image(\B[\xi_1])) = d - \dim (\ker (\B[\xi_1])) = (2s -r) - s = s-r.
		\]
		On the other hand, restricting $\B[\xi_2]$ to $\ker \B[\xi_1]$, the nullspace of $\B[\xi_{2}]|_{\ker(\mathbb{B}[\xi_{1}])}$ is $V_0$, hence its dimension is $r$, and the dimension of its image is $s-r$. Hence, $\B[\xi_2]$ restricted to $\ker \B[\xi_1]$ is still surjective onto $\R^l$, and therefore such a map $L_{\xi_{1},\xi_{2}}$ exists.
	\end{proof}
The second key ingredient to establish Theorem \ref{thm:firstorder} is the adding of measures on the boundary. In particular, we aim to add a measure $\mu$ such that $u+\mu$ is $\B$-free as a measure \emph{on the torus $\mathbb{T}_{2}$}: 
	\begin{lemma}[Adding measures on the boundary]\label{lem:addmeas}
		There are linear maps $S_1,S_2$ with the following properties:
		 \begin{enumerate}
			\item $S_1 \colon \hilb^1_{\B}(Q) \to \mathscr{P}_{2}(\R^{2};\R^{d})\cap\ker(\mathbb{B})$,
			\item $S_2 \colon \hilb^1_{\B}(Q) \to \lebe^2(\partial Q;\R^{d}) (\hookrightarrow \hilb^{-1}(Q;\R^{d}))$,
			\item $\B (u + S_1 u + S_2 u) = 0$ in $\hilb^{-2} (\mathbb{T}_2;\R^l)$ for all $u\in\mathrm{H}_{\mathbb{B}}^{1}(Q)$.
		\end{enumerate}
	\end{lemma}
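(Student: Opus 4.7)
The plan is to view $u\in\hilb^{1}_{\B}(Q)$ as a distribution on $\mathbb{T}_{2}$ via periodic extension, pinpoint the failure of $\B$-freeness — which is concentrated on the two seams $\Gamma_{1}=\{x_{1}=0\}$ and $\Gamma_{2}=\{x_{2}=0\}$ of $\mathbb{T}_{2}$ — and then cancel it by adding first a polynomial $S_{1}u\in\ker(\B)$ and then a boundary measure $S_{2}u$. Throughout, I would assume that $\B$ is spanning in the sense of Lemma~\ref{lem:Aux1}, as reduced to above. Writing $j_{1}(x_{2}):=u(0,x_{2})-u(1,x_{2})$ and $j_{2}(x_{1}):=u(x_{1},0)-u(x_{1},1)$ for the seam jumps (which lie in $\hilb^{1/2}(\mathbb{T};\R^{d})$ by trace theory), a routine integration-by-parts would yield
\begin{align*}
\B u = B_{1}j_{1}(x_{2})\otimes\delta(x_{1}) + B_{2}j_{2}(x_{1})\otimes\delta(x_{2})\qquad\text{in }\hilb^{-1}(\mathbb{T}_{2};\R^{l}),
\end{align*}
and integrating $\B u=0$ over $Q$ gives the compatibility identity $B_{1}J_{1}+B_{2}J_{2}=0$, where $J_{k}:=\int_{0}^{1}j_{k}$.

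Next, I would set $S_{1}u:=J_{1}x_{1}+J_{2}x_{2}$, a first-order polynomial lying in $\mathscr{P}_{2}(\R^{2};\R^{d})\cap\ker(\B)$ precisely by virtue of the compatibility identity, and whose presence shifts the seam jumps to $\widetilde{j}_{k}:=j_{k}-J_{k}$, which now have zero mean. Applying Lemma~\ref{lem:Aux2} with $(\xi_{1},\xi_{2})=(e_{1},e_{2})$ and $(e_{2},e_{1})$ yields linear maps $L_{1}\colon\R^{l}\to\ker(B_{1})$ and $L_{2}\colon\R^{l}\to\ker(B_{2})$ with $B_{2}L_{1}=\id_{\R^{l}}=B_{1}L_{2}$, and I would define
\begin{align*}
a_{1}(x_{2}):=-\int_{0}^{x_{2}}L_{1}B_{1}\widetilde{j}_{1}(s)\,\mathrm{d}s,\qquad a_{2}(x_{1}):=-\int_{0}^{x_{1}}L_{2}B_{2}\widetilde{j}_{2}(s)\,\mathrm{d}s.
\end{align*}
Both $a_{k}$ would be periodic on $\mathbb{T}$ (by the mean-zero property of $\widetilde{j}_{k}$), take values in $\ker(B_{k})$ pointwise (since $L_{k}$ does), and belong to $\hilb^{3/2}(\mathbb{T})\subset\lebe^{2}(\mathbb{T};\R^{d})$; then $S_{2}u$ is the element of $\lebe^{2}(\partial Q;\R^{d})$ corresponding to the boundary distribution $a_{1}\otimes\delta_{\Gamma_{1}}+a_{2}\otimes\delta_{\Gamma_{2}}$ on $\mathbb{T}_{2}$.

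The verification of~(3) is then a direct distributional computation: the tensor-product Leibniz rule gives
\begin{align*}
\B(S_{2}u) = B_{1}a_{1}\,\delta'(x_{1}) + B_{2}a_{1}'\,\delta(x_{1}) + B_{1}a_{2}'\,\delta(x_{2}) + B_{2}a_{2}\,\delta'(x_{2}),
\end{align*}
whose $\delta'(x_{k})$-order contributions vanish because $a_{k}$ takes values in $\ker(B_{k})$, while the $\delta(x_{k})$-order contributions exactly cancel the jump terms of $\B(u+S_{1}u)$ thanks to $B_{2}a_{1}'=-B_{1}\widetilde{j}_{1}$ and $B_{1}a_{2}'=-B_{2}\widetilde{j}_{2}$. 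The main obstacle I anticipate is arranging periodicity of the antiderivatives $a_{k}$ on $\mathbb{T}$, and this is exactly where the polynomial $S_{1}u$ becomes indispensable: it shifts the jumps by their means so that the ensuing antiderivatives close up, while the identity $B_{1}J_{1}+B_{2}J_{2}=0$ enforced by $\B u=0$ on $Q$ is precisely what allows a single polynomial in $\ker(\B)$ to correct both seams simultaneously.
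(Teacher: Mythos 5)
Your proposal is correct and, in fact, takes a cleaner route than the paper at the step where the polynomial corrector $S_{1}u$ is built. The paper records only $c_{k}:=\int_{0}^{1}\B[e_{k}]j_{k}\in\R^{l}$ (rather than $J_{k}:=\int_{0}^{1}j_{k}\in\R^{d}$ as you do), uses the single relation $c_{1}=-c_{2}$ coming from $\int_{Q}\B u=0$, and is consequently forced to manufacture a \emph{quadratic} $S_{1}u=a_{11}x_{1}^{2}+2a_{12}x_{1}x_{2}+a_{22}x_{2}^{2}$, where $a_{12}:=-L_{e_{1},e_{2}-e_{1}}(c_{1})$ and $a_{11},a_{22}$ are then chosen via Lemma~\ref{lem:Aux2} so that $\B S_{1}u=0$ and the two new face integrals $\widetilde c_{k}$ vanish; verifying this requires a short but slightly delicate computation and also an anti-projection identity $S_{1}(u+S_{1}u)=0$ to set up $S_{2}$ in general. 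You instead retain the full jump means $J_{1},J_{2}\in\R^{d}$, observe that the same boundary integral gives the stronger identity $B_{1}J_{1}+B_{2}J_{2}=0$, and hence can take the degree-one corrector $S_{1}u=J_{1}x_{1}+J_{2}x_{2}\in\mathscr P_{2}(\R^{2};\R^{d})\cap\ker(\B)$; this forces the shifted jumps $\widetilde j_{k}=j_{k}-J_{k}$ to have zero mean outright, which is exactly what makes the primitives $a_{k}$ close up on $\mathbb{T}$. The remainder of your verification (periodicity of $a_{k}$ and their vanishing at the corners, pointwise membership $a_{k}\in\ker(B_{k})$, and the distributional Leibniz computation of $\B(a_{k}\otimes\delta_{\Gamma_{k}})$ matching the seam terms of $\B(u+S_{1}u)$ via $B_{2}L_{1}=B_{1}L_{2}=\id$) mirrors the paper's treatment of $S_{2}$ and is sound, modulo a small inaccuracy: you do not really get $a_{k}\in\hilb^{3/2}(\mathbb{T})$ without extra endpoint compatibility, but $a_{k}\in\hilb^{1}_{0}((0,1))\subset\lebe^{2}(\mathbb{T})$ is all that is used, and that you do have. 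In short, your approach buys a simpler $S_{1}$ at no cost, while the paper's heavier construction tracks less information and therefore needs a higher-degree corrector together with the auxiliary identity $S_{1}(u+S_{1}u)=0$.
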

	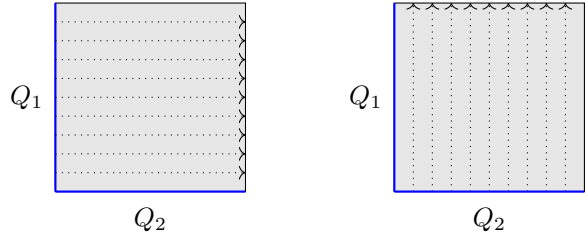
\begin{figure}
	\begin{tikzpicture}[scale=1.25]
	\draw[-,fill=black!10!white] (0,0)--(2,0)--(2,2)--(0,2)--(0,0);
	\node at (-0.3,1) {$Q_{1}$};
	\node at (1,-0.3) {$Q_{2}$};
	\draw[-,blue,thick] (0,0)--(0,2);
	\draw[dotted,black,->] (0,1.8)--(2,1.8);
	\draw[dotted,black,->] (0,1.6)--(2,1.6);
	\draw[dotted,black,->] (0,1.4)--(2,1.4);
	\draw[dotted,black,->] (0,1.2)--(2,1.2);
	\draw[dotted,black,->] (0,1.0)--(2,1.0);
	\draw[dotted,black,->] (0,0.8)--(2,0.8);
	\draw[dotted,black,->] (0,0.6)--(2,0.6);
	\draw[dotted,black,->] (0,0.4)--(2,0.4);
	\draw[dotted,black,->] (0,0.2)--(2,0.2);
	\draw[-,blue,thick] (0,0)--(2,0);
	\end{tikzpicture}
	\hspace{1cm}
	\begin{tikzpicture}[scale=1.25]
	\draw[-,fill=black!10!white] (0,0)--(2,0)--(2,2)--(0,2)--(0,0);
	\node at (-0.3,1) {$Q_{1}$};
	\node at (1,-0.3) {$Q_{2}$};
	\draw[-,blue,thick] (0,0)--(0,2);
	\draw[dotted,black,->] (1.8,0)--(1.8,2);
	\draw[dotted,black,->] (1.6,0)--(1.6,2);
	\draw[dotted,black,->] (1.4,0)--(1.4,2);
	\draw[dotted,black,->] (1.2,0)--(1.2,2);
	\draw[dotted,black,->] (1,0)--(1,2);
	\draw[dotted,black,->] (0.8,0)--(0.8,2);
	\draw[dotted,black,->] (0.6,0)--(0.6,2);
	\draw[dotted,black,->] (0.4,0)--(0.4,2);
	\draw[dotted,black,->] (0.2,0)--(0.2,2);
	\draw[-,blue,thick] (0,0)--(2,0);
	\end{tikzpicture}
	\caption{Cube notation and the idea in the proof of Lemma~\ref{lem:addmeas}. We \emph{periodify} the given functions to access the theory on the two-dimensional torus $\mathbb{T}_{2}$ in Proposition~\ref{prop:OliverKahn}. To enforce periodicity, the non-periodic contributions of some $u$ are handled by adding suitable correctors defined in terms of horizontal or vertical line integrals, respectively.}
	\end{figure}
	\begin{proof}
		Recall that the trace operator is bounded from $\mathrm{H}^1(Q;\R^d)$ to $\lebe^2(\partial Q; \R^d)$. Define $Q_1 =\{0\} \times [0,1]$
		and $Q_2 =[0,1] \times \{0\}$, which may both be seen as subsets of $Q$ and the torus $\mathbb{T}_2$. Define for $u\in\mathrm{H}_{\mathbb{B}}^{1}(Q;\R^{d})$
		\begin{align*}
		w_1(y) := \B[e_1] (u(0,y) - u(1,y))\quad\text{and}\quad w_2(x) :=\B[e_2] (u(x,0) - u(x,1))
		\end{align*}
for $\mathscr{L}^{1}$-a.e. $x,y\in [0,1]$. Then $u \mapsto w_j$ is linear and bounded from $\mathrm{H}^1_{\B}(Q) \to \lebe^2([0,1];\R^l)$. We then put 
\begin{align*}
c_{1}:=c_{1}(u):=\int_{0}^{1}w_{1}(y)\dif y\quad\text{and}\quad c_{2}:=c_{2}(u):=\int_{0}^{1}w_{2}(x)\dif x
\end{align*}
and observe that, because of $u\in\mathrm{H}_{\mathbb{B}}^{1}(Q)$ and a subsequent integration by parts, 
\begin{align}\label{eq:oasis}
0=\int_{Q}\mathbb{B}u\dif x = \int_{\partial Q}\mathbb{B}[\nu_{\partial Q}]u\dif\mathscr{H}^{1}
\end{align}
with the outer unit normal $\nu_{\partial Q}$ to $\partial Q$. Decomposing $\partial Q$ into its single faces and using the definition of $c_{1},c_{2}$, we find that $c_{1}=-c_{2}$. 

Now define the polynomial $S_1 u$ as follows: \begin{equation}
		    S_1 u(x_1,x_2) := a_{11} x_1^2 + 2 a_{12} x_1 x_2 + a_{22} x_2^2
		\end{equation}
	    for $a_{11},a_{12},a_{22} \in \R^d$ defined in terms of the maps $L$ from Lemma~\ref{lem:Aux2} via
	    \begin{align}\label{eq:stefdef}
		a_{12} :=- L_{e_1,e_2-e_1} (c_1), \quad a_{11} := L_{e_2,e_1} (-\B[e_2] a_{12}), \quad a_{22} := L_{e_1,e_2} (-\B[e_1] a_{12}).
		\end{align}
By the properties of the maps $L$ as displayed in Lemma~\ref{lem:Aux2}, we have \begin{align}\label{eq:auxisystem}
\begin{split}
		    \B[e_1] a_{11} + \B[e_2] a_{12} &= \B[e_1] (L_{e_2,e_1} (-\B[e_2] a_{12})) + \B[e_2] a_{12} = 0, \\
		    \B[e_2] a_{22} + \B[e_1] a_{12} &= \B[e_2]  (L_{e_1,e_2} (-\B[e_1] a_{12})) + \B[e_1] a_{12} =0.
		    \end{split}
		\end{align}
This particularly implies that 
\begin{align}\label{eq:Bvanish}
\begin{split}
\mathbb{B}S_{1}u & = \mathbb{B}[e_{1}]\partial_{1}S_{1}u + \mathbb{B}[e_{2}]\partial_{2}S_{1}u \\ 
& = \mathbb{B}[e_{1}](2a_{11}x_{1}+2a_{12}x_{2})+\mathbb{B}[e_{2}](2a_{12}x_{1}+2a_{22}x_{2})\stackrel{\eqref{eq:auxisystem}}{=}0. 
\end{split}
\end{align}
For future reference, we now record that 
\begin{align}\label{eq:antiprojection}
S_{1}(S_{1}u + u ) = 0\qquad\text{for all}\;u\in\mathrm{H}_{\mathbb{B}}^{1}(Q), 
\end{align}
which can be seen as follows: With the obvious definition of $\widetilde{c}_{1}$,  
\begin{align*}
\widetilde{c}_{1} := \int_{0}^{1}\widetilde{w}_{1}(y)\dif y & := \int_{0}^{1}\mathbb{B}[e_{1}](S_{1}u(0,y)-S_{1}u(1,y))\dif y \\ 
& = \int_{0}^{1}\mathbb{B}[e_{1}](2a_{22}y^{2}-a_{11}-2a_{12}y)\dif y \\
& = \int_{0}^{1}\mathbb{B}[e_{1}](-a_{11}-2a_{12}y)\dif y\;\;\;\;\;\;(\text{by~\eqref{eq:stefdef} and Lemma~\ref{lem:Aux2}})\\
& = -\mathbb{B}[e_{1}]a_{11}-\mathbb{B}[e_{1}]a_{12} \\ 
& = \mathbb{B}[e_{2}-e_{1}]a_{12} = -c_{1},
\end{align*}
the ultimate two equalities being valid by~\eqref{eq:stefdef} and Lemma~\ref{lem:Aux2} as well. Using that $\mathbb{B}S_{1}u=0$, we may argue as in~\eqref{eq:oasis}ff. to find that 
\begin{align*}
\widetilde{c}_{2} := \int_{0}^{1}\widetilde{w}_{2}(x)\dif y := \int_{0}^{1}\mathbb{B}[e_{2}](S_{1}u(x,0)-S_{1}u(x,1))\dif x = c_{1} = -c_{2}.
\end{align*}
This implies that $S_{1}(S_{1}u)=-S_{1}u$ and  hereafter~\eqref{eq:antiprojection}. 

We now come to the definition of $S_{2}u\colon Q_{1}\cup Q_{2}\to \R^{d}$. If $S_{1}u\equiv 0$, we then define 
\begin{align} \label{def:s2}
		S_2 u(0,y) := -\int_{0}^y L_{e_1,e_2} w_1(t) \dt, \quad S_2 u(x,0) := -\int_{0}^x L_{e_2,e_1} w_2(t) \dt.
		\end{align}
In general, we recall~\eqref{eq:antiprojection} and  define for general $u\in\mathrm{H}_{\mathbb{B}}^{1}(Q)$ 
\begin{align*}
S_{2}u:=S_{2}(u+S_{1}u).
\end{align*}
Then $S_2u$ defined on $Q_1 \cup Q_2$ has the following properties: \begin{enumerate}
			\item\label{item:meisterpropper1} $S_2u(0,0) = S_2u(0,1) = S_2u(1,0) =0$ due to $c_1=c_2 =0$. Indeed, since $u\in\mathrm{H}_{\mathbb{B}}^{1}(Q)$ satisfies $S_{1}u\equiv 0$, we conclude $a_{12}=0$. On the other hand, $L_{e_{1},e_{2}-e_{1}}$ is injective by Lemma~\ref{lem:Aux2} and so $c_{1}=0$ in light of~\eqref{eq:stefdef}; but then $c_{2}=-c_{1}=0$ as well.
			\item\label{item:meisterpropper2} $S_2u \in \lebe^2(Q_{1}\cup Q_{2};\R^{d})$.
			\item\label{item:meisterpropper3} $S_2u(0,\cdot) \in \ker(\B[e_1])$, $S_2u(\cdot,0) \in \ker(\B[e_2])$ by Lemma~\ref{lem:Aux2}.
			\item\label{item:meisterpropper4} $S_2u(0,\cdot), S_2u(\cdot,0) \in \mathrm{H}^1_0((0,1))$ and, again by Lemma~\ref{lem:Aux2},
			\begin{align*}        
			\B[e_2]\tfrac{\dif}{\dif t} S_2 u(0,t) = -w_1(t),\;\;\;\B[e_1] \tfrac{\dif}{\dif t}S_2 u(t,0) = -w_2(t).
			\end{align*}
		\end{enumerate}
By periodicity, we may view $S_2u \in \lebe^2(\partial Q;\R^{d})$, and this can be seen as an element of $\mathrm{H}^{-1}(\mathbb{T}_2;\R^d)$ by identifying it with the bounded linear functional 
		\begin{align*}
\mathrm{H}^{1}(\mathbb{T}_{2};\R^{d})\ni \psi\mapsto \int_{Q_{1}}S_{2}u\cdot\mathrm{tr}(\psi)\dHaus^{1} +\int_{Q_{2}}S_{2}u\cdot\mathrm{tr}(\psi)\dHaus^{1} .
		\end{align*}
Thus, for all $\phi \in \mathrm{H}^2(\mathbb{T}_2;\R^l)$ we have \begin{align*}
		\langle \B S_2 u &, \phi \rangle_{\mathrm{H}^{-2}(\mathbb{T}_{2})\times\mathrm{H}^{2}(\mathbb{T}_{2})} = - \int_{Q_1} S_2u \cdot \mathrm{tr}(\B^{\ast} \phi) \dHaus^1 - \int_{Q_2} S_2 u \cdot \mathrm{tr}(\B^{\ast} \phi )\dHaus^1\\
		&= - \int_{Q_1} (\B[e_1] S_2 u)\cdot \mathrm{tr}(\partial_1 \phi) + (\B[e_2] S_2 u )\cdot\mathrm{tr}(\partial_2 \phi) \dHaus^1 \\ & \;\; \;\;- \int_{Q_2} (\B[e_1] S_2 u)\cdot \mathrm{tr}(\partial_1 \phi) + (\B[e_2] S_2 u)\cdot\mathrm{tr}(\partial_2 \phi) \dHaus^1 \\
		&\!\stackrel{\ref{item:meisterpropper3}}{=} - \int_{Q_1} (\B[e_2] S_2 u)\cdot \mathrm{tr}(\partial_2 \phi)\dHaus^{1} - \int_{Q_2} (\B[e_1] S_2 u)\cdot\mathrm{tr}(\partial_1 \phi)\dHaus^1 \\
		& = \int_{Q_1} (\B[e_2] \partial_2 S_2 u)\cdot \mathrm{tr}(\phi)\dHaus^1 + \int_{Q_2}  (\B[e_1] \partial_1 S_2 u)\cdot\mathrm{tr}(\phi) \dHaus^1 \\
		&\!\stackrel{\ref{item:meisterpropper4}}{=} -\int_{Q_1} w_1 \cdot\mathrm{tr}(\phi) \dHaus^1- \int_{Q_2} w_2 \cdot\mathrm{tr}(\phi) \dHaus^1.
		\end{align*}
		On the other hand, for any $\phi \in \mathrm{H}^2(\mathbb{T}_2;\R^l)$
		\begin{align*}
		\langle \B u, \phi \rangle_{\mathrm{H}^{-2}(\mathbb{T}_{2})\times\mathrm{H}^{2}(\mathbb{T}_{2})}   &= - \int_{\mathbb{T}_2} u\cdot\B^{\ast} \phi \dx = \int_Q \B u \cdot\phi \dx- \int_{\partial Q} (\B[\nu_{\partial\Omega}] u)\cdot \mathrm{tr}(\phi) \dHaus^1 \\
		&= \int_{Q_1}  w_1 \cdot\mathrm{tr}(\phi) \dHaus^1+ \int_{Q_2} w_2 \cdot\mathrm{tr}(\phi) \dHaus^1.
		\end{align*}
Hence, $\mathbb{B}(u+S_{2}u)=0$ in $\mathrm{H}^{-1}(\mathbb{T}_{2};\R^{l})$ whenever $u\in\mathrm{H}_{\mathbb{B}}^{1}(Q)\cap\{S_{1}u\equiv 0\}$.  In the general case, we apply the foregoing result to $u+S_{1}u$ and hence obtain 
\begin{align*}
\mathbb{B}((u+S_{1}u) + S_{2}(u+S_{1}u))=0.
\end{align*}
To conclude, as $S_2 u = S_2 (u+S_1u)$, we have $\B(u+S_1 u +S_2 u) =0$ as an element of $\hilb^{-2}(\mathbb{T}_2;\R^l)$, and the proof is complete. 
	\end{proof}	
	
	\begin{proposition}\label{prop:OliverKahn}
		Suppose that $\B$ satisfies the spanning condition. There is a linear and bounded map $\A^{-1} \colon \hilb^1_{\B}(Q) \to \lebe^2(Q;\R^m)$, such that $\A \circ \A^{-1} = \id$, meaning that for all $u\in\mathrm{H}_{\mathbb{B}}^{1}(Q)$ and all $\phi \in \hilb^1_0(Q;\R^d)$ \begin{align*}
		\int_{Q} \A^{-1} u \cdot \A^{\ast} \phi = \int_{Q} u \phi.
		\end{align*}
	\end{proposition}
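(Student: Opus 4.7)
The plan is to lift $u\in\hilb^{1}_{\B}(Q)$ to a $\B$-free distribution on the two-torus $\mathbb{T}_{2}$ by means of Lemma~\ref{lem:addmeas}, invert $\A$ there by a Fourier multiplier built from the Moore--Penrose pseudo-inverse of $\A[\xi]$, and descend back to $Q$ with a polynomial correction to absorb the contribution of $S_{1}u$.

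More precisely, I set $\tilde{u}:=u+S_{1}u+S_{2}u$. Lemma~\ref{lem:addmeas} yields $\B\tilde{u}=0$ in $\hilb^{-2}(\mathbb{T}_{2};\R^{l})$; since $u\in\lebe^{2}(Q;\R^{d})$, $S_{1}u$ is a polynomial, and $S_{2}u\in\lebe^{2}(\partial Q;\R^{d})\hookrightarrow\hilb^{-1}(\mathbb{T}_{2};\R^{d})$ via the trace embedding, we have $\tilde{u}\in\hilb^{-1}(\mathbb{T}_{2};\R^{d})$. Because $\B$ has constant rank over $\C$ (hence over $\R$), so does its potential $\A$, and the Moore--Penrose pseudo-inverse $\A[\xi]^{+}\colon\R^{d}\to\R^{m}$ is smooth and $(-1)$-homogeneous on $\R^{2}\setminus\{0\}$; cf.~\cite{FoMu,Raita}. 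Decomposing $\tilde{u}=\bar{\tilde{u}}+\tilde{u}_{0}$ into mean and zero-mean parts, I would define $v_{0}$ through its Fourier coefficients $\hat{v}_{0,k}:=\A[2\pi\imag k]^{+}\hat{\tilde{u}}_{0,k}$ for $k\in\mathbb{Z}^{2}\setminus\{0\}$. Since $\ker\B[\xi]=\Image\A[\xi]$ and $\A[\xi]\A[\xi]^{+}$ is the orthogonal projection onto this image, one finds $\A v_{0}=\tilde{u}_{0}$ on $\mathbb{T}_{2}$; Plancherel combined with the $(-1)$-homogeneity of $\A[\cdot]^{+}$ gives $\|v_{0}\|_{\lebe^{2}(\mathbb{T}_{2})}\lesssim\|\tilde{u}_{0}\|_{\hilb^{-1}(\mathbb{T}_{2})}$.

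For the constant mean $\bar{\tilde{u}}$, the spanning hypothesis gives $V_{0}=\{0\}$ and hence $\R^{d}=\Image A_{1}+\Image A_{2}$; a linear selection yields $a_{1},a_{2}\in\R^{m}$ with $A_{1}a_{1}+A_{2}a_{2}=\bar{\tilde{u}}$, and the linear polynomial $p_{0}(x):=a_{1}x_{1}+a_{2}x_{2}$ satisfies $\A p_{0}\equiv\bar{\tilde{u}}$. Restricting to $Q$ and noting that $S_{2}u$ is supported on $\partial Q$, I obtain $\A(v_{0}+p_{0})|_{Q}=u+S_{1}u$ in $\mathscr{D}'(Q)$. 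To cancel the polynomial $S_{1}u$, I use that by the proof of Lemma~\ref{lem:addmeas} its coefficients satisfy $a_{11}\in\Image A_{2}$ and $a_{12},a_{22}\in\Image A_{1}$: a cubic ansatz $\pi_{u}(x)=\sum_{|\alpha|=3}c_{\alpha}x^{\alpha}$ with coefficients $c_{\alpha}\in\R^{m}$ reduces the equation $\A\pi_{u}=S_{1}u$ to a linear algebraic system whose solvability reduces to a compatibility condition that is met thanks to the surjectivity statements of Lemma~\ref{lem:Aux2} and the spanning hypothesis. Setting $\A^{-1}u:=(v_{0}+p_{0}-\pi_{u})|_{Q}$ then produces a linear bounded operator with $\A\A^{-1}u=u$ in $\mathscr{D}'(Q)$, which is precisely the weak identity appearing in the proposition.

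The hard part is the algebraic correction $\pi_{u}$: the degree bookkeeping together with the matching of coefficients across $\Image A_{1}$ and $\Image A_{2}$ must close up, and this is exactly where the spanning hypothesis enters in an essential way. All remaining boundedness and linearity claims follow routinely from Plancherel on $\mathbb{T}_{2}$, continuity of the trace $\hilb^{1}(Q)\to\lebe^{2}(\partial Q)$, and finite-dimensionality of the polynomial ansatz.
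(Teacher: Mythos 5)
Your strategy coincides with the paper's: periodise $u$ via Lemma~\ref{lem:addmeas}, split $u+S_{1}u+S_{2}u$ into mean and zero-mean parts on $\mathbb{T}_{2}$, invert $\A$ on the zero-mean part by a Fourier multiplier built from a constant-rank right inverse of $\A[\xi]$ (the paper invokes \cite{raitanew}, which is precisely the Moore--Penrose construction you describe), integrate the mean by a degree-one polynomial, and then subtract a homogeneous cubic $\pi_{u}$ with $\A\pi_{u}=S_{1}u$ to land back on $u$ after restricting to $Q$. The bookkeeping on $Q$, the boundedness, and the linearity are all handled the same way as in the paper.

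The genuine gap is precisely the step you yourself call ``the hard part''. You assert that the linear system for the coefficients of $\pi_{u}$ closes ``thanks to the surjectivity statements of Lemma~\ref{lem:Aux2} and the spanning hypothesis'', but that is not the decisive ingredient, and as written the claim is unverified. The paper constructs $\pi_{u}$ explicitly (as $-(P_{3}+P_{4})$), and the argument hinges on the potential relation $\B[\xi]\A[\xi]\equiv0$, specifically its cross-term $\B[e_{1}]\A[e_{2}]+\B[e_{2}]\A[e_{1}]=0$ (equation~\eqref{eq:kloppo}): this identity is what makes $\A[e_{2}]b_{2}\in\ker\B[e_{1}]$ (cf.~\eqref{favre}), so that $b_{1}=\A^{-1}[e_{1}](-\A[e_{2}]b_{2})$ is defined and the cubic ansatz closes. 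The constant-rank right inverses $\A^{-1}[\xi]\colon\ker\B[\xi]\to(\ker\A[\xi])^{\perp}$ also need the fact that $a_{11}\in\ker\B[e_{2}]$, $a_{22}\in\ker\B[e_{1}]$, and $a'\in\ker\B[e_{1}]\cap\ker\B[e_{2}]$, the last of which again uses $\B\A\equiv0$. Neither the spanning condition nor Lemma~\ref{lem:Aux2} delivers these memberships by itself; without invoking the potential relation your compatibility claim is simply asserted. The rest of your argument is sound, but this one step needs the explicit computation that the paper carries out.
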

	\begin{proof}
Given $u\in\mathrm{H}_{\mathbb{B}}^{1}(Q)$, we write $u = (u+S_1 u)+(-S_1 u) =: u_{1} + u_{2}$ with $S_{1}$ as in the preceding lemma. We treat $u_{1}$ and $u_{2}$ separately.

Recall that $S_2 u_1 =S_2u$ for $S_2$ as in the previous lemma. We write \[
	u_1 + S_2 u_1 = u_0 + \bar{u}
	\]
 for some $u_{0}\in\R^{d}$ and $\bar{u}\in\mathrm{H}^{-1}(\torus_2;\R^d)$, where $\bar{u}$ has zero average over $\T_2$, i.e. \[
 \langle v, 1 \rangle_{\mathrm{H}^{-1}\times\mathrm{H}^1} =0.
 \]
 Note that $\B \bar{u} =0$ in $\mathrm{H}^{-2}(\torus_2;\R^l)$.
  By the same argument as in Lemma \ref{aux} we can write $u_{0}=\A P_{1}$ for a suitable polynomial $P_{1}$ of order one  with mean value zero; moreover, the map $u_{0}\mapsto P_{1}$ can be arranged to be linear.

For $\bar{u}$, we can apply the theory for constant rank operators on the torus. In particular, by \cite{raitanew} there exists a linear and bounded operator $\A_{\torus}^{-1} \colon \mathrm{H}^{-1}(\torus_2;\R^d) \to \lebe^2(\torus_2;\R^m)$ that satisfies  \begin{equation} \label{Ainv}
    \A \circ \A_{\torus}^{-1} v = v \quad\text{for all}\; v \in \mathrm{H}^{-1}(\torus_2;\R^d) \text{ with } \B v =0\;\text{and}\;\langle v, 1 \rangle_{\mathrm{H}^{-1}\times\mathrm{H}^1} =0.
    \end{equation}
	Thus, defining $w := P_1 +  \A_{\torus}^{-1} \bar{u}$, we conclude that \begin{enumerate}
	    \item $w$ depends linearly on $u$;
	    \item $\Vert w \Vert_{\lebe^2} \leq c(\Vert u_0 \Vert_{\lebe^2} + \Vert \bar{u} \Vert_{\mathrm{H}^{-1}}) \leq C \Vert u \Vert_{\mathrm{H}^1}$.
   \item $\A w = (u_{1} + S_2 u)$.
	\end{enumerate}
We now establish that $u_{2}$ can be written as $u_{2}=\A P_{2}$ for a third order polynomial $P_{2}$.	
Recall that \[
	S_1 u(x_1,x_2)= a_{11} x_1^2 + 2 a_{12} x_1 x_2 + a_{22} x_2^2
	\]
	with $a_{ij}$ defined as in \eqref{eq:stefdef}. We now define polynomials $P_3$ and $P_4$, such that $\A(P_3+P_4) = -S_1 u$.
	
\textit{Definition of $P_3$}: By the definition of the map $L$ from Lemma~\ref{lem:Aux2} and by \eqref{eq:stefdef}, the coefficients $a_{ij}$ obey the following: \[
	a_{11} \in \ker (\B[e_2]), \quad a_{22} \in \ker (\B[e_1]).
	\]
	The differential operator $\A$ is a potential of $\B$. Therefore, for any $\xi \in \R^2 \setminus \{0\}$, there is a linear map \[
	\A^{-1}[\xi] \colon \ker(\B [\xi]) \to (\ker \A [\xi])^{\perp}
 	\]
	with $\A[\xi] \circ \A^{-1}[\xi]=\mathrm{Id}_{\ker(\B[\xi])}$ (seen as a Fourier multiplier, this map exactly defines the operator in \eqref{Ainv}). For future reference, we note that expanding $\mathbb{B}[\xi]\A[\xi]=0$ for $\xi=\xi_{1}e_{1}+\xi_{2}e_{2}\in\R^{2}$ particularly yields 
	\begin{align}\label{eq:kloppo}
	\xi_{1}\xi_{2}(\mathbb{B}[e_{1}]\mathbb{A}[e_{2}]+\mathbb{B}[e_{2}]\mathbb{A}[e_{1}])=0. 
	\end{align} 
Let us define \[
	P_3 (x_{1},x_{2}) := -\A^{-1}[e_2] (a_{11}) x_1^2 x_2 - \A^{-1}[e_1](a_{22}) x_1 x_2^2.
	\]
	Observe that $(-S_1 u-\A P_3)$ still satisfies $\B (- S_1 u - \A P_3) =0$ by virtue of $\mathbb{B}[\xi]\A[\xi]=0$ and~\eqref{eq:Bvanish}, and has the form \begin{equation}\label{tuchel}
	\begin{split}
	& (- S_1 u - \A P_3) = a' x_1 x_2, \\ & \quad a'= -2 a_{12} + 2\A[e_1] \left(\A^{-1}[e_2] (a_{11})\right) + 2\A[e_2] \left(\A^{-1}[e_1] (a_{22})\right).
	\end{split}
	\end{equation}
\textit{Definition of $P_4$}: We define $P_4$ dependent on $a'$ in \eqref{tuchel}. Note that $\B (a' x_1 x_2) =0$ and therefore $a' \in \ker (\B[e_1]) \cap \ker(\B[e_2])$. Then define
	\begin{align}\label{eq:nagelsmann}
	b_2 :=\tfrac{1}{2} \A^{-1}[e_1] a', \quad b_1 :=  \A^{-1}[e_1] (-\A[e_2] b_2).
	\end{align}
Note that $b_2$ is well-defined as $a' \in \ker( \B[e_1])$. Further, note that \begin{align} \label{favre}
	\B[e_1] (-\A[e_2] b_2) \stackrel{\eqref{eq:kloppo}}{=} \B[e_2] (\A[e_1] b_2) = \tfrac{1}{2}\B[e_2] a' =0.
	\end{align}
	Consequently $\A[e_2] b_2 \in \ker(\B[e_1])$ and so $b_1$ is well-defined. Let us set $P_4(x_1,x_2) :=(\tfrac{1}{3} b_1 x_1^3 + b_2 x_1^2 x_2)$. Then \begin{align*}
\A P_{4}(x_{1},x_{2}) = \A (\tfrac{1}{3}b_1 x_1^3 + b_2 x_1^2 x_2) &\hspace{0.2cm}= \left( \A[e_1] b_1  + \A[e_2] b_2\right) x_1^2 + 2 \A[e_1] b_2 x_1 x_2  \\
	  &\!\overset{\eqref{eq:nagelsmann}}{=}\left( -\A[e_2] b_2 + \A[e_2] b_2 \right) x_1^2 + a' x_1 x_2 \\ 
	  &\!\!\stackrel{\eqref{tuchel}_{1}}{=} 	(- S_1 u - \A P_3). 
	\end{align*}
We conclude that $\A P_3 + \A P_4 = - S_1 u$, which is what we wanted to show.

To summarise, we found $w \in \lebe^2(\torus_2;\R^d)$, such that $\A w = (u + S_1 u)+ S_2 u$ in $\mathrm{H}^{-1}(\torus_2;\R^d)$ and $P$ such that $\A P = - S_1 u$. Both $w$ and $P$ depend linearly on $u$. Let us now define \[
	\A^{-1} u := w +P.
	\]
	Then $\A (\A^{-1} u) = u + S_2 u$ in $\mathrm{H}^{-1}(\torus_2;\R^d)$. As $S_2 u$ is supported on $\partial Q$, we conclude that $\A (\A^{-1} u) = u$ in $\mathrm{H}^{-1}(Q;\R^d)$. 
	\end{proof}
	Using the result for first order operators, we are also able to formulate a version of Theorem \ref{thm:firstorder} for higher order operators.
	\begin{corollary}\label{thm:higherorder}
		Let $n=2$ and let $\B$ be a differential operator of order $k$. Then there exists a finite dimensional space $X \subset \mathrm{H}^k(Q;\R^d) \cap \ker(\B)$ consisting of polynomials and a linear, bounded map $\A^{-1}\colon \mathrm{H}^k(Q;\R^d) \cap \ker(\B) \to \lebe^2(Q;\R^m)$ such that $u-\A \circ \A^{-1} u \in X$. 
	\end{corollary}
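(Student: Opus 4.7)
The strategy is to reduce the higher-order assertion to the first-order case of Theorem~\ref{thm:firstorder} via a jet-lift. Given $u\in\mathrm{H}^{k}(Q;\R^{d})\cap\ker(\mathbb{B})$, I would consider the $(k-1)$-jet $w:=\nabla^{k-1}u\in\mathrm{H}^{1}(Q;\widehat{V})$ with $\widehat{V}\subset\R^{d}\otimes(\R^{n})^{\otimes(k-1)}$ the appropriate symmetric-tensor target space, and construct a first-order operator $\widehat{\mathbb{B}}$ on $\widehat{V}$-valued fields whose kernel encodes both $u\in\ker(\mathbb{B})$ and the compatibility that $w$ indeed arises as such a jet.

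Concretely, $\widehat{\mathbb{B}}$ would consist of two first-order blocks in the spirit of Remark~\ref{rem:inhom}: a \emph{compatibility block} of curl-type, forcing $w=\nabla^{k-1}u$ for some scalar field $u$, and an \emph{equation block} rewriting $\mathbb{B}u=0$ as a first-order condition on $w$. The latter is available because $\mathbb{B}=\sum_{|\alpha|=k}\mathbb{B}_{\alpha}\partial^{\alpha}$ is homogeneous of order $k$, so $\mathbb{B}u$ only involves one further derivative of the $(k-1)$-jet $w$. A direct symbol computation then yields
\begin{align*}
\ker(\widehat{\mathbb{B}}[\xi])=\left\{\xi^{\otimes(k-1)}\otimes s\colon s\in\ker(\mathbb{B}[\xi])\right\}\qquad\text{for every}\;\xi\in\mathbb{C}^{n}\setminus\{0\},
\end{align*}
so $\widehat{\mathbb{B}}$ inherits constant rank over $\mathbb{C}$, in the mixed-order sense of Remark~\ref{rem:inhom}. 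Homogenising $\widehat{\mathbb{B}}$ by the $\nabla$-trick of Remark~\ref{rem:inhom} at the cost of an additional finite-dimensional nullspace, then applying Theorem~\ref{thm:firstorder} to the resulting genuinely first-order operator, yields a first-order potential $\widehat{\mathbb{A}}$, a finite-dimensional polynomial space $\widehat{X}$, and a bounded linear inverse $\widehat{\mathbb{A}}^{-1}\colon \mathrm{H}^{1}_{\widehat{\mathbb{B}}}(Q)\to \lebe^{2}(Q;\R^{\widehat{m}})$. Since the Fourier images of $\widehat{\mathbb{A}}[\xi]$ and $(\nabla^{k-1}\circ\mathbb{A})[\xi]$ both equal $\ker(\widehat{\mathbb{B}}[\xi])$, a dual formulation of Theorem~\ref{thm:main} identifies the distributional ranges of these two operators modulo polynomial correctors. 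Setting $\mathbb{A}^{-1}u:=\widehat{\mathbb{A}}^{-1}(\nabla^{k-1}u)$ and peeling off $\nabla^{k-1}$ by iterating Lemma~\ref{aux} a total of $k-1$ times, one obtains $u=\mathbb{A}(\mathbb{A}^{-1}u)+\pi$ for some $\pi$ belonging to a finite-dimensional polynomial space $X\subset\mathrm{H}^{k}(Q;\R^{d})\cap\ker(\mathbb{B})$ that absorbs both the correctors from $\widehat{X}$ and the primitives lost when inverting $\nabla^{k-1}$.

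The main obstacle is setting up $\widehat{\mathbb{B}}$ so that Theorem~\ref{thm:firstorder} is applicable in the mixed-order form merely sketched in Remark~\ref{rem:inhom}, and verifying that $\widehat{\mathbb{A}}$ and $\nabla^{k-1}\circ\mathbb{A}$ have the same distributional range modulo polynomials, rather than merely coinciding on Fourier images. Once these two points are settled, the iteration of Lemma~\ref{aux} and the resulting bookkeeping for the polynomial space $X$ are routine. An alternative, possibly cleaner, route would be to define $X$ directly as the intersection of $\mathrm{H}^{k}(Q;\R^{d})\cap\ker(\mathbb{B})$ with the kernel of $\nabla^{k-1}$, reducing the statement to applying Theorem~\ref{thm:firstorder} to the lifted problem and integrating back componentwise.
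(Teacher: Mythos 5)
Your core idea -- lifting $\B$ to a first-order operator on $(k-1)$-jets by pairing a jet-rewriting of $\B$ with a curl-type compatibility block, then invoking Theorem~\ref{thm:firstorder} and peeling off $\nabla^{k-1}$ -- is exactly the reduction the paper uses. However, your main line of argument has a genuine gap at the point where you apply Theorem~\ref{thm:firstorder}: that theorem \emph{presupposes} a first-order potential $\A$ for the first-order annihilator $\B$, it does not \emph{produce} one. You invoke "a first-order potential $\widehat{\A}$" as if it fell out of Theorem~\ref{thm:firstorder}, but you neither construct it nor argue that the lifted $\widehat{\B}$ even admits a first-order potential. Raita's potential construction for constant-rank operators gives a potential of possibly high order, and $\nabla^{k-1}\circ\A$, the natural candidate, has order $k+(\text{order of }\A)-1$, which is not first order.

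Your attempted repair -- compare $\widehat{\A}$ and $\nabla^{k-1}\circ\A$ via a "dual formulation of Theorem~\ref{thm:main}" -- does not close the gap. Theorem~\ref{thm:main} is a statement about the distributional \emph{nullspaces of annihilators} whose symbols have equal kernels; the dual statement you need, identifying the distributional \emph{ranges of potentials} whose symbols have equal images modulo polynomials, is neither stated nor proved in the paper, and it is not an obvious consequence of Theorem~\ref{thm:main}. Moreover, even if such a range-identification were available, you would need $\widehat{\A}^{-1}$ to land in a space from which $\A^{-1}u$ can be read off, which fails if $\widehat{\A}$ is an unrelated first-order potential. The paper avoids all of this by performing a \emph{second} jet-lift: it rewrites $\nabla^{l-1}\circ\A$ (order $k+l-1$) as a first-order operator $\A^{k+l-2}$ on $(k+l-2)$-jets together with a further curl constraint, so that Theorem~\ref{thm:firstorder} is applied to a genuinely first-order potential-annihilator pair built from the original $\A$ and $\B$, and the output $\tilde{v}$ integrates back to a $v$ with $\A v - u$ polynomial. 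Your closing "alternative route" gestures at this, but as written your primary argument is incomplete without the second lift of $\A$.
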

	Essentially, the argument is that we can reduce this case to the case of first order operators. First of all, let us reduce to a first-order $\B$. Let $\B$ be of order $l \in \N$. Then $\mathbb{B}u =0$ if and only if $u^{l-1} = \nabla^{l-1} u$ satisfies 
	\begin{equation} \label{diffop:1}
	\B^{l-1} u^{l-1} =0 \quad \text{ and } \quad\curl^{l-1} u^{l-1} =0,
	\end{equation}
	where $\B^{l-1}$ is a suitable reformulation of the differential constraint $\B$ as a first order operator dependent on the $(l-1)$-derivatives; the condition $\curl^{l-1} u^{l-1}$ encodes that $u^{l-1}$ is a $(l-1)$-gradient. Observe that $\A_{l-1} := \nabla^{l-1} \circ \A$ is a potential for the differential operator described in~\eqref{diffop:1}. 	For $\A$ of order $k$ observe that $\A v = u$ if and only if for $v^{k-1} = \nabla^{k-1} v$ \begin{equation} \label{diffop:2}
	\A^{k-1} v^{k-1} = u \text{ and } \quad\curl^{k-1} v^{k-1} =0,
	\end{equation}
	where again, $\A^{k-1}$ is a suitable reformulation of $\A$ in terms of derivatives of order $(k-1)$. Taking \eqref{diffop:1} and \eqref{diffop:2} together and applying Theorem \ref{thm:firstorder}, up to a finite dimensional vector space, for each $u^{l-1}$ satisfying $\B^{l-1} u^{l-1} =0$ we might find $\tilde{v}$, such that \[
	(\A_{l-1})^{k+l-2} \tilde{v}=u, \quad \curl^{k+l-2} \tilde{v} =0.
	\]
	and, therefore, $v$, such that \[
	\nabla^{l-1} \circ \A v = u.
	\]
	As a consequence, up to a finite dimensional vector space $\mathcal{X}$, $\A v - u \in \mathcal{X}$.
\begin{remark}
To conclude, let us remark that another approach to the problem described in this section is discussed in \cite[Lem.~14]{Adolfo} for operators of maximal rank. Whereas we believe that our approach might also apply to other, slightly more general scenarios and since our focus here is more on displaying consequences of the constant rank conditions in the exemplary case of $n=2$, we shall defer the discussion to higher dimensions to future work. 
\end{remark}

\end{document}